\DeclareFontFamily{U}{mathx}{\hyphenchar\font45}
\DeclareFontShape{U}{mathx}{m}{n}{
      <5> <6> <7> <8> <9> <10>
      <10.95> <12> <14.4> <17.28> <20.74> <24.88>
      mathx10
      }{}
\DeclareSymbolFont{mathx}{U}{mathx}{m}{n}
\DeclareMathAccent{\widecheck}{0}{mathx}{"71}
\newtheorem{theorem}{Theorem}[section]
\newtheorem{lemma}[theorem]{Lemma}
\newtheorem{proposition}[theorem]{Proposition}
\newenvironment{customthm}[1]
  {\innercustomthm}
  {\endinnercustomthm}
\theoremstyle{definition}
\newtheorem{definition}[theorem]{Definition}
\theoremstyle{remark}
\newtheorem{remark}[theorem]{Remark}
\newcommand{\nw}[1]{\textcolor{black}{#1}}    
\newcommand{\jdm}[1]{\textcolor{black}{#1}}   
\newcommand{\nnw}[1]{\textcolor{black}{#1}}    
\title[]{Support $\tau_2$-tilting and 2-torsion pairs}
\author{Jordan McMahon}
\address{unaffiliated}
\email{jordanmcmahon37@gmail.com}
\begin{document}
\begin{abstract}
The theory of $\tau$-tilting was introduced by Adachi--Iyama--Reiten; one of the main results is a bijection between support $\tau$-tilting modules and torsion classes. We are able to generalise this result in the context of the higher Auslander--Reiten theory of Iyama. For a finite-dimensional algebra $A$ with 2-cluster-tilting subcategory $\mathcal{C}\subseteq\mathrm{mod}A$, we are able to find a correspondence between support $\tau_2$-tilting $A$-modules and torsion pairs in $\mathcal{C}$ satisfying an additional functorial finiteness condition. 
\end{abstract}

\maketitle

\section{Introduction}

Support-tilting modules were first studied by Ringel \cite{ring2} and connected to torsion classes and cluster algebras by Ingalls--Thomas \cite{it}. Since support-tilting modules are able to capture the behaviour of clusters, this led to further study. Significantly, Adachi, Iyama and Reiten \cite{adir} introduced support $\tau$-tilting modules, and were able to find a bijection, for an finite-dimensional algebra $A$, between support $\tau$-tilting $A$-modules and functorially-finite torsion classes in $\mathrm{mod}A$. See also \cite{ir} for a survey of $\tau$-tilting theory, which has seen much activity in recent years \cite{asai}, \cite{bst}, \cite{br}, \cite{irrt}, \cite{irtt}, \cite{pppp}, \cite{pla} as well as its generalisations such as in silting theory \cite{ahmv}, \cite{ahs2}, \cite{bl}.
A natural question to ask is whether similar results are true in the context of higher Auslander--Reiten theory, as introduced by Iyama in \cite{iy3}, \cite{iy1}, and an active area of research \cite{di}, \cite{is}, \cite{jk}, \cite{kva}, \cite{mcmahon}, \cite{ps}. As the name suggests, higher Auslander--Reiten theory has connections to higher-dimensional geometry and topology \cite{djl}, \cite{djw}, \cite{mw}, \cite{ot}, \cite{will} and is hence a natural generalisation. The result we find is the following:

\begin{theorem}\label{elso}
Let $A$ be a finite-dimensional algebra with $2$-cluster-tilting subcategory $\mathcal{C}\subseteq\mathrm{mod}A$. Then there is a correspondence between support $\tau_2$-tilting $A$-modules and 2-functorially finite torsion pairs in $\mathcal{C}$.
\end{theorem}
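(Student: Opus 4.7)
The plan is to adapt the Adachi--Iyama--Reiten strategy to the 2-cluster-tilting setting, constructing explicit maps in both directions and verifying that they are mutually inverse. Throughout, the 2-cluster-tilting subcategory $\mathcal{C}$ plays the role that $\mathrm{mod}A$ plays in the classical theorem, and the $(d+2)$-term higher exact sequences available in $\mathcal{C}$ replace short exact sequences.

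First, I would define the map $\Phi$ from support $\tau_2$-tilting modules to 2-torsion pairs. Given a support $\tau_2$-tilting $A$-module $M$, set $\mathcal{T}_M$ to be the higher analogue of $\mathrm{Fac}(M)$ inside $\mathcal{C}$, namely the full subcategory of objects of $\mathcal{C}$ that appear as the rightmost nonzero term of a 3-term exact sequence in $\mathcal{C}$ whose other terms are in $\mathrm{add}(M)$ (or, equivalently, a suitable closure of $\mathrm{add}(M)$ under higher quotients). Let $\mathcal{F}_M$ be the corresponding Hom-orthogonal inside $\mathcal{C}$. I would then check directly that $(\mathcal{T}_M,\mathcal{F}_M)$ satisfies the axioms of a 2-torsion pair in $\mathcal{C}$, using the support $\tau_2$-tilting hypothesis to produce, for every $X\in\mathcal{C}$, the required 2-approximation triangle with left and right parts in $\mathcal{T}_M$ and $\mathcal{F}_M$.

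Conversely, I would define $\Psi$ by sending a 2-functorially finite torsion pair $(\mathcal{T},\mathcal{F})$ to a basic additive generator of the 2-Ext-projective objects of $\mathcal{T}$, enlarged by the projective summands corresponding to the support datum. The 2-functorial finiteness hypothesis is the critical input here: it guarantees, by a standard Auslander-type argument, the existence of minimal right $\mathcal{T}$-approximations of injectives and hence of enough 2-Ext-projectives to assemble a module $M_{(\mathcal{T},\mathcal{F})}$ that has the correct number of summands to be support $\tau_2$-tilting.

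The main obstacle will be the higher Auslander--Reiten--type characterization of 2-Ext-projectivity: in the classical AIR theorem the identification $\mathrm{Hom}(M,\tau N)=0 \Leftrightarrow \mathrm{Ext}^1(N,\mathrm{Fac}\,M)=0$ is the heart of the argument, and here I expect its analogue to take the form $\mathrm{Hom}(M,\tau_2 N)=0 \Leftrightarrow \mathrm{Ext}^2(N,\mathcal{T}_M)=0$, valid for $M,N\in\mathcal{C}$, via Iyama's higher Auslander--Reiten duality. Establishing this equivalence, and using it both to show that $\Phi(M)$ has $\mathrm{add}(M)$ as its 2-Ext-projectives and to show that $\Psi(\mathcal{T},\mathcal{F})$ satisfies the $\tau_2$-rigidity condition, will be the main technical work. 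Once this higher AIR lemma is in hand, the verifications $\Psi\circ\Phi=\mathrm{id}$ and $\Phi\circ\Psi=\mathrm{id}$ should follow by essentially the same bookkeeping as in \cite{adir}, with 3-term exact sequences in $\mathcal{C}$ in place of short exact sequences.
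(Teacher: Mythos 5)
Your overall strategy coincides with the paper's: the correspondence is $T\mapsto\mathrm{Fac}\,T\cap\mathcal{C}$ in one direction and ``assemble the Ext-projectives into a coresolution of $A$'' in the other, and you have correctly isolated the key duality $\mathrm{Hom}_A(M,\tau_2N)=0\iff\mathrm{Ext}^2_A(N,\mathrm{Fac}\,M)=0$ (the paper's Proposition \ref{aslemma}, a higher Auslander--Smal{\o} argument). Nevertheless the proposal has two genuine gaps. First, the claim that the mutual-inverse verifications ``follow by essentially the same bookkeeping as in \cite{adir}'' is precisely where the difficulty lives. In the classical case a torsion class is characterised by closure under quotients and extensions; for the $4$-term exact sequences of a $2$-cluster-tilting subcategory there is no naive notion of ``closed under extensions,'' and the paper must replace it by the existence of $d$-pushout diagrams covering any sequence $0\to T_0\to X\to Y\to T_3\to 0$ with ends in $\mathcal{T}$ (Proposition \ref{prop2}), whose proof and subsequent use depend on the nontrivial diagram Lemmas \ref{pushout} and \ref{topushout}. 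Without some substitute for this characterisation you can show neither that $\mathrm{Fac}\,T\cap\mathcal{C}$ is the torsion part of a $2$-functorially-finite torsion pair nor that the modules extracted from a torsion pair are Ext-projective in it.

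Second, your map $\Psi$ leans on a summand count (``the correct number of summands to be support $\tau_2$-tilting''), but the paper's definition of $\tau_2$-tilting carries no counting condition: it instead demands a coresolution $0\to A\to T_0\to T_1\to T_2\to 0$ with $T_i\in\mathrm{add}\,T$, which is exactly what the left-$\mathcal{T}$-approximation of $A$ produces. A rank-counting criterion in the higher setting is established nowhere in the paper and would itself be a substantial (and, in this generality, unproven) claim, so the argument cannot be routed through it. Relatedly, the $2$-functorial finiteness of $\mathrm{Fac}\,T\cap\mathcal{C}$ is not automatic: the paper obtains it by first proving that a $\tau_2$-tilting module is a $2$-tilting module over $A/\mathrm{ann}\,T$ (Lemma \ref{adirlemma}) and then invoking Happel's resolution theorem (Lemma \ref{happel}) to manufacture $\mathrm{add}\,T$-resolutions of objects of the torsion class; your sketch does not indicate how these approximating sequences would be produced.
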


\section{Background and Notation}
Consider a finite-dimensional algebra $\nnw{A}$ over a field $K$, and fix a positive integer $d$. We will assume that $\nnw{A}$ is of the form $KQ/I$, where $KQ$ is the path algebra over some quiver $Q$ and $I$ is an admissible ideal of $KQ$.
An $\nnw{A}$-module will mean a finitely-generated left $\nnw{A}$-module; by $\mathrm{mod}\nnw{A}$ we denote the category of $\nnw{A}$-modules. The functor $D=\mathrm{Hom}_K(-,K)$ defines a duality. Let $\mathrm{add}M$ be the full subcategory of $\mathrm{mod}\nnw{A}$ composed of all $\nnw{A}$-modules isomorphic to direct summands of finite direct sums of copies of $M$. For an $A$-module $M$, the right annihilator of $M$ is the two-sided ideal $\mathrm{ann}M:=\{a\in A|aM=0\};$ for a class of modules $\mathcal{X}$ we set $\mathrm{ann}\mathcal{X}:=\{a\in A|aX=0\ \forall\ X\in\mathcal{X}\}.$ For an $A$-module $M$, define $\mathrm{Sub}M:=\{N\in\mathrm{mod}A|\exists\ \text{injection}\ N\hookrightarrow M\};$ for a class of modules $\mathcal{X}$ we set $\mathrm{Sub}\mathcal{X}:=\{N\in\mathcal{X}|\exists X\in\mathcal{X},\ \text{injection}\ \ N\hookrightarrow X\}.$ Define $\mathrm{Fac}M$ and $\mathrm{Fac}\mathcal{X}$ dually.

\subsection{\jdm{Higher} cluster-tilting subcategories}\label{precl-sect}

Define  $\tau_d:=\tau\Omega^{d-1}$ to be the \emph{$d$-Auslander--Reiten translation} and  $\tau_{d}^-:=\nw{\tau^{-}\Omega^{-(d-1)}}$ to be the \emph{inverse $d$-Auslander--Reiten translation}.
A subcategory $\mathcal{C}$ of $\mathrm{mod}A$ is \emph{precovering} or \emph{contravariantly finite} if for any $M\in \mathrm{mod}\Lambda$ there is an object $C_M\in\mathcal{C}$ and a morphism $f:C_M\rightarrow M$ such that
$\mathrm{Hom}(C,-)$ is exact on the sequence 
$$\begin{tikzcd}
C_M\arrow{r}&M\arrow{r}&0
\end{tikzcd}$$
for all $C\in\mathcal{C}$. The module $C_M$ is said to be a \emph{right $\mathcal{C}$-approximation}. The dual notion of precovering is \emph{preenveloping} or \emph{covariantly finite}.  A subcategory $\mathcal{C}$ that is both precovering and preenveloping is called \emph{functorially finite}.
\begin{definition}\cite[Definition 2.2]{iy1}
For a finite-dimensional algebra $\nnw{A}$, a subcategory $\mathcal{C}\subseteq\mathrm{mod}(\nnw{A})$ is a \emph{$d$-cluster-tilting subcategory} if it satisfies the following conditions:
\begin{align*}
\mathcal{C}&=\{X\in\mathcal{C}\mid \mathrm{Ext}^i_{\nnw{A}}(M,X)=0\ \forall\ 0< i<d \}.\\
\mathcal{C}&=\{X\in\mathcal{C} \mid \mathrm{Ext}^i_{\nnw{A}}(X,M)=0\ \forall\ 0< i<d \}.
\end{align*} 
\end{definition}
 A \emph{right $\mathcal{C}$-resolution} is a sequence
$$\begin{tikzcd}
\cdots \arrow{r}&C_1\arrow{r}&C_0\arrow{r}&M\arrow{r}&0
\end{tikzcd}$$
with $C_i\in\mathcal{C}$ for each $i$, and which becomes exact under $\mathrm{Hom}_A(C,-)$ for each $C\in\mathcal{C}$. Define a \emph{left $\mathcal{C}$-resolution} dually.

\begin{theorem}\cite[Theorem 3.6.1]{iy1}\label{iyamatheorem}
Let $\mathcal{C}\subseteq \mathrm{mod} A$ be a $d$-cluster-tilting subcategory. Then 
\begin{enumerate}
\item Any $M\in\mathrm{mod}A$ has a right $\mathcal{C}$-resolution
$$\begin{tikzcd}
0\arrow{r}&C_{d-1}\arrow{r}&\cdots \arrow{r}&C_1\arrow{r}&C_0\arrow{r}&M\arrow{r}&0.
\end{tikzcd}$$
\item Any $M\in\mathrm{mod}A$ has a left $\mathcal{C}$-resolution
$$\begin{tikzcd}
0\arrow{r}&M \arrow{r}&C_0\arrow{r}&C_1\arrow{r}&\cdots\arrow{r}&C_{d-1}\arrow{r}&0.
\end{tikzcd}$$
\end{enumerate}
\end{theorem}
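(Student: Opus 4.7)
The plan is to adapt the Adachi--Iyama--Reiten bijection between support $\tau$-tilting modules and functorially finite torsion classes in $\mathrm{mod} A$ to the 2-cluster-tilting setting, replacing $\tau$ by $\tau_2$, torsion \emph{classes} by torsion \emph{pairs} living inside $\mathcal{C}$, and ordinary approximations by the 2-step approximations afforded by Theorem~\ref{iyamatheorem}. In AIR, the two inverse maps are $M \mapsto \mathrm{Fac}(M)$ (support $\tau$-tilting to torsion class) and $\mathcal{T} \mapsto P(\mathcal{T})$ (torsion class to the sum of indecomposable Ext-projectives); the technical heart is the Auslander--Reiten formula, characterising Ext-projectivity in $\mathrm{Fac}(M)$ as the vanishing $\mathrm{Hom}_A(M,\tau M)=0$. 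I would mirror all three ingredients.

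For the forward map $\Phi$, to a support $\tau_2$-tilting module $M$ I would associate the pair $(\mathcal{T}(M), \mathcal{F}(M))$ inside $\mathcal{C}$, where $\mathcal{T}(M)$ consists of objects $X\in\mathcal{C}$ fitting in a length-two resolution $M_1 \to M_0 \to X \to 0$ with $M_i \in \mathrm{add}(M)$, and $\mathcal{F}(M) := \mathcal{C}\cap \mathrm{Ker}\,\mathrm{Hom}_A(M,-)$. The Hom-vanishing between the two pieces is immediate; the torsion-pair filtration for an arbitrary $C\in\mathcal{C}$ is produced by taking a minimal right $\mathrm{add}(M)$-approximation, closing it into an exact sequence, and applying Theorem~\ref{iyamatheorem} to truncate the resulting resolution back to $\mathcal{C}$. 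The 2-functorial finiteness of $(\mathcal{T}(M),\mathcal{F}(M))$ reduces, via the same truncation, to the (ordinary) functorial finiteness of $\mathrm{add}(M)$ in $\mathrm{mod} A$ combined with the functorial finiteness of $\mathcal{C}$.

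For the backward map $\Psi$, to a 2-functorially finite torsion pair $(\mathcal{T},\mathcal{F})$ I would associate the direct sum $M$ of the non-isomorphic indecomposable objects $X\in\mathcal{T}$ satisfying $\mathrm{Ext}^2_A(X,\mathcal{T})=0$, i.e.\ the higher Ext-projectives. Using the higher Auslander--Reiten formula $D\,\underline{\mathrm{Hom}}_A(Y,\tau_2 X)\cong \mathrm{Ext}^2_A(X,Y)$ valid on $\mathcal{C}$, this $\mathrm{Ext}^2$-projectivity translates into $\mathrm{Hom}_A(M,\tau_2 M)=0$, the defining rigidity property. To promote rigidity to \emph{support} $\tau_2$-tilting I would prove a Bongartz-type completion inside $\mathcal{C}$: given $X\in\mathcal{T}$, apply the left approximation by $\mathcal{T}$ supplied by 2-functorial finiteness, then splice with a 2-resolution from Theorem~\ref{iyamatheorem} to extract enough summands to reach the correct rank.

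The hardest step, I expect, is showing that $\Phi$ and $\Psi$ are mutually inverse --- concretely, that every 2-functorially finite torsion pair is recovered from its $\mathrm{Ext}^2$-projectives. The non-trivial inclusion is $\mathcal{T}\subseteq \mathcal{T}(M)$: every $X\in\mathcal{T}$ must admit a length-two $\mathrm{add}(M)$-resolution staying in $\mathcal{C}$. The plan is to iterate minimal right $\mathrm{add}(M)$-approximations inside $\mathcal{T}$, use 2-functorial finiteness to keep kernels inside $\mathcal{T}$ and the higher AR formula to identify them as $\mathrm{Ext}^2$-projective, and then invoke Theorem~\ref{iyamatheorem} to truncate the long $\mathcal{C}$-resolution produced this way down to length two. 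Dualising the argument should recover $\mathcal{F}$, completing the bijection.
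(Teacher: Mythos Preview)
Your proposal does not address the stated theorem at all. The statement you were asked to prove is Theorem~\ref{iyamatheorem}: any $M\in\mathrm{mod}A$ admits a right (resp.\ left) $\mathcal{C}$-resolution of length $d$ whenever $\mathcal{C}$ is $d$-cluster-tilting. In the paper this theorem is not proved; it is quoted verbatim from Iyama~\cite[Theorem~3.6.1]{iy1} as background, so there is no ``paper's own proof'' to compare against. A proof of this result would proceed by taking a minimal right $\mathcal{C}$-approximation $C_0\to M$, showing via the $\mathrm{Ext}$-orthogonality defining $\mathcal{C}$ that the kernel satisfies $\mathrm{Ext}^i_A(\mathcal{C},-)=0$ for $0<i<d-1$, and iterating until after $d-1$ steps the kernel itself lies in $\mathcal{C}$. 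Nothing in your proposal touches this.

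What you have actually sketched is a strategy for the paper's \emph{main} result, Theorem~\ref{elso}, on the correspondence between support $\tau_2$-tilting modules and 2-functorially finite torsion pairs in $\mathcal{C}$. Even read as such, your route diverges from the paper's: you propose to mimic AIR directly via the maps $M\mapsto(\mathcal{T}(M),\mathcal{F}(M))$ and $(\mathcal{T},\mathcal{F})\mapsto \bigoplus P(\mathcal{T})$ using $\mathrm{Ext}^2$-projectives, whereas the paper first characterises torsion classes intrinsically by a closure-plus-$d$-pushout condition (Proposition~\ref{prop2}) and then verifies this criterion for $\mathrm{Fac}T\cap\mathcal{C}$ using the pullback Lemmas~\ref{pushout} and~\ref{topushout} together with Happel's resolution Lemma~\ref{happel}. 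Your ``hardest step'' --- recovering $\mathcal{T}$ from its $\mathrm{Ext}^2$-projectives via iterated approximations truncated by Theorem~\ref{iyamatheorem} --- is plausible in outline but leaves unproved the key claim that kernels of minimal $\mathrm{add}(M)$-approximations remain in $\mathcal{T}$ and become $\mathrm{Ext}^2$-projective after finitely many steps; the paper sidesteps this entirely by working with the $d$-pushout criterion instead.
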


Recall the stable module category $\underline{\mathrm{mod}}A$ is full subcategory of $\mathrm{mod}A$ obtained by factoring out all morphisms that factor through a projective module.
\begin{theorem}\cite[Theorem 1.5]{iy1}\label{lada}
Let $A$ be a finite-dimensional algebra. Then: 
\begin{itemize}
\item If $\mathrm{Ext}^i_A(M,A)=0$ for all $0<i<d$, then $\mathrm{Ext}^i_A(M,N)\cong D\mathrm{Ext}^{d-i}_A (N,\tau_dM)$ and $\underline{\mathrm{Hom}}_A(M,N)\cong D\mathrm{Ext}^d_A(N,\tau_dM)$ for all $M\in\mathrm{mod}A$ and all $0<i<d$.
\item If $\mathrm{Ext}^i_A(DA,N)=0$ for all $0<i<d$, then $\mathrm{Ext}^i_A(M,N)\cong D\mathrm{Ext}^{d-i}_A (\tau_d^-N,M)$ and $\overline{\mathrm{Hom}}_A(M,N)\cong D\mathrm{Ext}^d_A(\tau_d^-N,M)$ for all $N\in\mathrm{mod}A$ for all $0<i<d$.
\end{itemize}
\end{theorem}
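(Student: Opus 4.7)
The plan is to follow the strategy of Adachi--Iyama--Reiten \cite{adir}, adapted to the $2$-cluster-tilting subcategory $\mathcal{C}$. The bijection will send a support $\tau_2$-tilting pair $(M, P)$ to the $2$-torsion pair $(\mathcal{U}_M, \mathcal{V}_M)$ with torsion class $\mathcal{U}_M := \mathrm{Fac}\,M \cap \mathcal{C}$ and torsion-free class $\mathcal{V}_M := \{ N \in \mathcal{C} \mid \mathrm{Hom}_A(M, N) = 0 \}$, with the support information recorded by $P$. In the reverse direction, to a $2$-functorially finite $2$-torsion pair $(\mathcal{U}, \mathcal{V})$ in $\mathcal{C}$ I will associate an Ext-projective generator $M_\mathcal{U}$ of $\mathcal{U}$, assembled via a Bongartz-type completion, and show that $(M_\mathcal{U}, P_\mathcal{U})$ is support $\tau_2$-tilting for an appropriate projective summand $P_\mathcal{U}$.

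For the forward map, I would verify that $(\mathcal{U}_M, \mathcal{V}_M)$ satisfies the axioms of a $2$-torsion pair inside $\mathcal{C}$. The Hom-vanishing is immediate from $\mathcal{V}_M \subseteq M^\perp$. To produce the approximation sequences required for an arbitrary $C \in \mathcal{C}$, one starts from the classical canonical sequence $0 \to tC \to C \to C/tC \to 0$ with $tC \in \mathrm{Fac}\,M$, and then splices in right and left $\mathcal{C}$-resolutions of the outer terms using Theorem \ref{iyamatheorem}; this converts the ordinary short exact sequence into the longer approximation data required by the $2$-torsion pair definition inside the $2$-abelian category $\mathcal{C}$. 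Two-functorial finiteness of $(\mathcal{U}_M, \mathcal{V}_M)$ then follows from functorial finiteness of $\mathrm{Fac}\,M$ in $\mathrm{mod}\,A$ combined with the same resolution trick.

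For the inverse map, I would use $2$-functorial finiteness to take a right $\mathcal{U}$-approximation of each indecomposable projective $A$-module lying in $\mathcal{C}$, assemble the approximating terms into $M_\mathcal{U}$, and let $P_\mathcal{U}$ collect the indecomposable projectives whose approximation vanishes. The core of the proof is then to show that $M_\mathcal{U}$ is $\tau_2$-rigid: since $M_\mathcal{U}$ is Ext-projective in $\mathcal{U}$ by construction, the $d = 2$ case of Theorem \ref{lada} yields $\mathrm{Ext}^1_A(M_\mathcal{U}, -) \cong D\,\underline{\mathrm{Hom}}_A(-, \tau_2 M_\mathcal{U})$ on $\mathcal{U}$, and this converts the Ext-projectivity into the vanishing $\mathrm{Hom}_A(-, \tau_2 M_\mathcal{U}) = 0$ that defines $\tau_2$-rigidity. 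Maximality of the number of non-isomorphic indecomposable summands of $M_\mathcal{U} \oplus P_\mathcal{U}$ follows by comparing against the number of indecomposable projectives, exactly as in \cite{adir}.

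Finally, mutual inverseness: starting from $M$, the module $M$ is tautologically the Ext-projective generator of $\mathcal{U}_M$, which returns $(M, P)$; starting from $(\mathcal{U}, \mathcal{V})$, the equality $\mathrm{Fac}\,M_\mathcal{U} \cap \mathcal{C} = \mathcal{U}$ follows from the approximation sequences built into the construction of $M_\mathcal{U}$. The main obstacle I expect is the Bongartz-completion step: classical arguments manufacture cokernels and universal extensions inside $\mathrm{mod}\,A$, whereas here the output must lie in $\mathcal{C}$. One has to repeatedly replace ambient cokernels by $\mathcal{C}$-resolutions from Theorem \ref{iyamatheorem}, verify that the auxiliary Ext-terms introduced by these resolutions vanish by the $\mathrm{Ext}^i$-orthogonality defining a $2$-cluster-tilting subcategory, and confirm that the resulting summand count still matches the rank of the projective part so that the produced pair is genuinely support $\tau_2$-tilting rather than merely $\tau_2$-rigid.
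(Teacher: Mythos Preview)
Your proposal does not address the stated theorem. Theorem~\ref{lada} is the higher Auslander--Reiten duality formula cited from \cite[Theorem~1.5]{iy1}: under the hypothesis $\mathrm{Ext}^i_A(M,A)=0$ for $0<i<d$ one has $\mathrm{Ext}^i_A(M,N)\cong D\mathrm{Ext}^{d-i}_A(N,\tau_d M)$ and the analogous statement for $\underline{\mathrm{Hom}}$. The paper does not prove this at all; it is quoted as a known result and used as an input (for example in Proposition~\ref{aslemma}). What you have written is instead a sketch of the main correspondence theorem, Theorem~\ref{elso}, between support $\tau_2$-tilting modules and $2$-functorially-finite torsion pairs in $\mathcal{C}$.

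If your intention was to outline a proof of Theorem~\ref{elso}, then your strategy is broadly in the spirit of \cite{adir} but diverges from the paper's actual argument. The paper does not build the inverse map via a Bongartz-type completion and an Ext-projective generator; instead it first gives an intrinsic characterisation of torsion classes in $\mathcal{C}$ (Proposition~\ref{prop2}) in terms of closure under factors and the existence of certain $d$-pushout diagrams, and then checks directly, using the diagram-chasing Lemmas~\ref{pushout} and~\ref{topushout}, that $\mathrm{Fac}\,T\cap\mathcal{C}$ satisfies this characterisation when $T$ is support $\tau_2$-tilting. The main technical work is the construction and manipulation of these $2$-pullback/$2$-pushout grids, not a summand-counting argument. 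So even as a proof of Theorem~\ref{elso}, your sketch omits precisely the parts the paper regards as nontrivial (Propositions~\ref{prop1}--\ref{prop2} and Lemmas~\ref{pushout}--\ref{topushout}), and your worry about the Bongartz step is not how the paper resolves the issue.
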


We may now generalise a result of Auslander--Smal{\o}, the proof of which is unchanged apart from indices.

\begin{proposition}[c.f. Proposition 5.8 of \cite{as}]\label{aslemma}
Let $A$ be a finite-dimensional algebra. Then for $X,Y\in\mathrm{mod}A$ the following are equivalent:
\begin{enumerate}[(i)]
\item  $\mathrm{Hom}_A(\tau_2^-Y,X)=0$.
\item $\underline{\mathrm{Hom}}_A(\tau_2^-Y,\mathrm{Sub}X)=0$ .
\item $\mathrm{Ext}^2_A(\mathrm{Sub}X,Y)=0$.
\end{enumerate}
\end{proposition}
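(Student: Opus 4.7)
My plan is to adapt the classical argument of Auslander-Smal{\o} \cite[Proposition 5.8]{as} to the higher setting, using Theorem \ref{lada} in place of the usual Auslander-Reiten duality formula. I would establish the implications (i) $\Rightarrow$ (ii), (ii) $\Leftrightarrow$ (iii), and (ii) $\Rightarrow$ (i) separately, closing the loop.

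The implication (i) $\Rightarrow$ (ii) is the most formal: for any monomorphism $S \hookrightarrow X$ with $S \in \mathrm{Sub}X$, left-exactness of $\mathrm{Hom}_A(\tau_2^- Y, -)$ embeds $\mathrm{Hom}_A(\tau_2^- Y, S)$ into $\mathrm{Hom}_A(\tau_2^- Y, X) = 0$, so both this group and its projective-stable quotient $\underline{\mathrm{Hom}}_A(\tau_2^- Y, S)$ vanish.

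For (ii) $\Leftrightarrow$ (iii), I would apply the second assertion of Theorem \ref{lada} to the pair $(\tau_2^- Y, S)$ for each $S \in \mathrm{Sub}X$: once the ambient hypothesis guaranteeing the requisite $\mathrm{Ext}$-vanishing is in force and $\tau_2 \tau_2^- Y$ is identified with $Y$ in the stable category, the formula reads
$$\underline{\mathrm{Hom}}_A(\tau_2^- Y, S) \;\cong\; D\,\mathrm{Ext}^2_A(S, Y),$$
and letting $S$ range over $\mathrm{Sub}X$ gives the equivalence of the vanishing conditions.

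The implication (ii) $\Rightarrow$ (i) is where the substantive work lies, and I expect it to be the main obstacle, even though it is entirely classical in spirit. Given $f: \tau_2^- Y \to X$, I would set $S = \mathrm{Im}(f) \in \mathrm{Sub}X$ and factor $f$ as $\tau_2^- Y \twoheadrightarrow S \hookrightarrow X$. By (ii) the epimorphism $\tau_2^- Y \twoheadrightarrow S$ represents the zero class in $\underline{\mathrm{Hom}}_A(\tau_2^- Y, S)$, so it factors through a projective module; concluding $f = 0$ then relies on the standard convention that $\tau_2^- Y$ has no projective summand, exactly as in \cite{as}. Since none of the Auslander-Smal{\o} reasoning depends on the specific value $d = 1$ beyond the formal properties of the duality and the closure of $\mathrm{Sub}X$ under images of morphisms out of $\tau_2^- Y$, the argument transposes to the present setting with only the indicated shift of indices.
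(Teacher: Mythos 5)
Your proposal follows essentially the same route as the paper: (i)~$\Rightarrow$~(ii) by left-exactness, (ii)~$\Leftrightarrow$~(iii) via the higher Auslander--Reiten duality of Theorem~\ref{lada}, and (ii)~$\Rightarrow$~(i) by factoring a putative nonzero $f$ through its image and observing that an epimorphism from $\tau_2^-Y$ (which has no projective summands) onto a nonzero module cannot factor through a projective, exactly as in the paper's contradiction argument. The only slip is cosmetic: the formula $\underline{\mathrm{Hom}}_A(\tau_2^-Y,S)\cong D\,\mathrm{Ext}^2_A(S,Y)$ you invoke comes from the \emph{first} bullet of Theorem~\ref{lada} (the $\underline{\mathrm{Hom}}$ statement, applied with $M=\tau_2^-Y$), not the second.
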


\begin{proof}
Firstly, statements $(ii)$ and $(iii)$ are equivalent by Theorem \ref{lada} and statement $(i)$ trivially implies $(ii)$. It remains to show $(ii)$ implies $(i)$; we prove by contradiction. So suppose there is a morphism $f:\tau_2^-Y\rightarrow X$ is a non-zero morphism with image $\mathrm{Im}f \in\mathrm{Sub} X$, and such that the induced morphism $f^\prime:\tau_2^-Y\twoheadrightarrow \mathrm{Im}f$ factors through a projective module $P$. Since $f^\prime$ is surjective, we may assume $P$ is the projective cover of $\mathrm{Im}f$. Let $g:\tau_2^-M\rightarrow P$ be any morphism; since $\tau_2^-Y$ has no non-trivial projective summands, we have $g(\tau_2^-M)\subset \mathrm{rad}P$, hence any composition $\tau_2^-Y\rightarrow P\rightarrow \mathrm{Im}f$ is not an epimorphism. Therefore the image of $f^\prime$ in $\underline{\mathrm{Hom}}_A(\tau_2^-Y,\mathrm{Im}f)=0$ is not zero. Hence $\mathrm{Hom}_A(\tau_2^-Y,X)=0$ if and only if $\underline{\mathrm{Hom}}_A(\tau_2^-Y,\mathrm{Sub}X)=0$.
\end{proof}
Key homological tools for higher cluster-tilting categories are $d$-pushouts and $d$-pullbacks, constructed as follows.

\begin{proposition}\cite[Proposition 3.8]{jasso}\label{jasso}
Let $A$ be a finite-dimensional algebra and $\mathcal{C}\subseteq\mathrm{mod}A$. For any $d$-exact sequence in $\mathcal{C}$
$$0\rightarrow Y_0\rightarrow Y_1\rightarrow\cdots \rightarrow Y_{d+1}\rightarrow 0$$ and any morphism $f:X_{d+1}\rightarrow Y_{d+1}$ there exists a commutative diagram in $\mathcal{C}$:

$$\begin{tikzcd}
0\arrow{r}&Y_0\arrow{r}\arrow[equals]{d}&X_1\arrow{r}\arrow{d}&\cdots \arrow{r}&X_{d}\arrow{d}\arrow{r}&X_{d+1}\arrow{r}\arrow{d}{f}&0\\
0\arrow{r}&Y_0\arrow{r}&Y_1\arrow{r}&\cdots \arrow{r}&Y_{d}\arrow{r}&Y_{d+1}\arrow{r}&0
\end{tikzcd}$$
such that there is an induced $d$-exact sequence
$$0\rightarrow X_1\rightarrow X_2\oplus Y_1\rightarrow \cdots\rightarrow X_{d+1}\oplus Y_{d}\rightarrow Y_{d+1}\rightarrow 0$$
\end{proposition}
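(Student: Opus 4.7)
The plan is to build the top row $X_1\to X_2\to\cdots\to X_{d+1}$ by working from right to left, at each stage forming an ordinary pullback in $\mathrm{mod}A$ and then replacing it by a right $\mathcal{C}$-approximation, using Theorem~\ref{iyamatheorem} and the $d$-cluster-tilting hypothesis to control kernels and cokernels.

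First I would form the ordinary pullback $P_d$ of $f\colon X_{d+1}\to Y_{d+1}$ along $Y_d\to Y_{d+1}$ in $\mathrm{mod}A$. Since the bottom row is exact at $Y_{d+1}$, the map $Y_d\to Y_{d+1}$ is epic, so the induced map $P_d\to X_{d+1}$ is epic with kernel equal to $\mathrm{Im}(Y_{d-1}\to Y_d)$. The module $P_d$ need not lie in $\mathcal{C}$, so I would fix a right $\mathcal{C}$-approximation $X_d\to P_d$, whose two components supply the maps $X_d\to X_{d+1}$ and $X_d\to Y_d$. Iterating this idea, at stage $i$ I would pull back the map $X_{i+1}\to Y_{i+1}$ against $Y_i\to Y_{i+1}$, take a right $\mathcal{C}$-approximation, and continue. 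The precovering property, combined with the vanishing $\mathrm{Ext}^j_A(\mathcal{C},\mathcal{C})=0$ for $0<j<d$, ensures at each step that the $\mathcal{C}$-approximation is a legitimate replacement and that the leftmost object can be identified with $Y_0$ (with vertical map the identity). This yields the commutative diagram with a $d$-exact top row in $\mathcal{C}$.

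For the second assertion, I would interpret the long sequence
$$0\rightarrow X_1\rightarrow X_2\oplus Y_1\rightarrow \cdots\rightarrow X_{d+1}\oplus Y_{d}\rightarrow Y_{d+1}\rightarrow 0$$
as the total complex of the chain map between the two horizontal $d$-exact sequences, equipped with the standard mapping-cone differentials (alternating signs on the $Y$-component). Exactness in $\mathrm{mod}A$ is then a formal diagram-chase, essentially the higher analogue of the $3\times 3$-lemma, using commutativity of each square of the ladder and exactness of both rows. Since each $X_{i+1}\oplus Y_i$ lies in $\mathcal{C}$, and $\mathcal{C}$ is closed under direct sums, exactness in the ambient category $\mathrm{mod}A$ upgrades to $d$-exactness in $\mathcal{C}$.

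The main obstacle I anticipate is the iterative step: after a $\mathcal{C}$-approximation replaces $P_i$ by $X_i$, the induced kernel no longer matches $\mathrm{Im}(Y_{i-1}\to Y_i)$ on the nose, and one must show that the discrepancy is absorbed by the next approximation. The clean way to control this is to work entirely within the $d$-abelian structure on $\mathcal{C}$ and to invoke the $\mathrm{Ext}$-vanishing of Theorem~\ref{lada} to guarantee that obstructions to lifting maps through the ladder vanish in the requisite degrees. Once the ladder is in place, the mapping-cone exactness is essentially bookkeeping.
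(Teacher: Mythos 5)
Your construction is the same one the paper uses: this proposition is quoted from Jasso, and the paper's accompanying sketch builds the top row exactly as you do, by iteratively taking the ordinary pullback (the kernel of $X_{i+1}\oplus Y_i\rightarrow Y_{i+1}$) and replacing it with a right $\mathcal{C}$-approximation until $Y_0$ is reached, with the mapping-cone sequence then supplying the induced $d$-exact sequence. The paper defers the remaining verifications (including the step you flag, that the approximations absorb the kernel discrepancies) to the cited reference, so your proposal matches its approach.
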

The construction of this commutative diagram is obtained inductively as follows: $X_d$ is defined to be the right $\mathcal{C}$-approximation of the pullback of $((X_{d+1}\oplus Y_d)\rightarrow Y_{d+1})$. Subsequently $X_{d-1}$ is defined to be the right $\mathcal{C}$-approximation of the pullback of $((X_{d}\oplus Y_{d-1})\rightarrow Y_{d})$. This continues until $X_0=Y_0$ is reached.

\begin{remark}\label{pullre}
In this setting, it is actually immaterial whether $Y_{d+1}$ is in $\mathcal{C}$ or not: Theorem \ref{iyamatheorem} ensures that the so-called $d$-kernel of the morphism $(X_{d+1}\oplus Y_d)\rightarrow Y_{d+1})$ exists. Equally, it is not important whether $Y_0\in\mathcal{C}$.
\end{remark} A technical result is the following:
\begin{lemma}\label{pushout}
Consider a module $X\in\mathcal{C}$ and two $d$-exact sequences:
$$0\rightarrow L\rightarrow M\rightarrow N\rightarrow X\rightarrow 0,$$
$$0\rightarrow L^\prime\rightarrow M^\prime\rightarrow N^\prime \rightarrow X\rightarrow 0.$$
Then there exist $P,Q\in\mathcal{C}$ with no common non-zero summands and a commutative diagram with exact rows and columns:

$$\begin{tikzcd}
&&&0\arrow{d}&0\arrow{d}\\
&&&L\arrow[equal]{r}\arrow{d}& L\arrow{d}\\
&& Q\arrow{r}\arrow{d}& Q\oplus M \arrow{r}\arrow{d}& M\arrow{d} \arrow{r}&0\\
0\arrow{r}&L^\prime \arrow{r}\arrow[equal]{d}&Q\oplus M^\prime \arrow{d}\arrow{r}&P\arrow{r}\arrow{d}&N\arrow{r}\arrow{d}&0\\
0\arrow{r} &L^\prime \arrow{r}&M^\prime \arrow{r}\arrow{d}&N^\prime\arrow{r}\arrow{d}&X\arrow{r}\arrow{d}&0\\
&&0&0&0
\end{tikzcd}$$ 
\end{lemma}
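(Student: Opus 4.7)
The strategy is to apply the 2-pushout construction of Proposition \ref{jasso} twice, once for each of the two given sequences, and then glue the outcomes together.

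First, I would apply Proposition \ref{jasso} to the sequence $0 \to L' \to M' \to N' \to X \to 0$ together with the morphism $N \to X$ coming from the first sequence. This yields a 2-exact sequence in $\mathcal{C}$
\[ 0 \to L' \to R_1 \to R_2 \to N \to 0 \]
fitting into a comparison diagram with the lower sequence. From the construction described immediately after Proposition \ref{jasso}, the middle term $R_2$ arises as a right $\mathcal{C}$-approximation of the fibre product $N \times_X N'$. By symmetry, applying Proposition \ref{jasso} to the first sequence and the morphism $N' \to X$ produces a 2-exact sequence
\[ 0 \to L \to S_1 \to S_2 \to N' \to 0 \]
whose middle term $S_2$ is likewise a right $\mathcal{C}$-approximation of $N \times_X N'$. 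We may therefore take $R_2=S_2$ and call this common module $P\in\mathcal{C}$.

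Next, I would identify $R_1$ with $Q\oplus M'$ and $S_1$ with $Q\oplus M$ for a common $Q\in\mathcal{C}$. By construction $R_1$ is a right $\mathcal{C}$-approximation of the pullback $P\times_{N'} M'$, and the original map $M' \to N'$ lifts through $P\to N\times_X N' \to N'$ to give a section $M'\to R_1$ of the natural projection $R_1\to M'$ from the comparison diagram. This splits off $M'$ as a direct summand, so $R_1\cong Q\oplus M'$ with $Q$ the complement. The analogous argument yields $S_1\cong Q'\oplus M$, and comparing the two constructions both $Q$ and $Q'$ are right $\mathcal{C}$-approximations of the same iterated pullback (essentially $P\times_{N\times_X N'}(M\times_X M')$), so they can be taken to coincide. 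The two pushout diagrams together with the two splittings now assemble into the required large diagram: the middle row becomes the 2-exact sequence $0\to L'\to Q\oplus M'\to P\to N\to 0$, the middle column becomes $0\to L\to Q\oplus M\to P\to N'\to 0$, and the split short exact sequences involving $Q$ fill in the third row and column. Commutativity of every square follows from that of the two pushout diagrams, exactness is preserved by the direct summand decompositions, and any common direct summand of $P$ and $Q$ corresponds to a redundant choice of $\mathcal{C}$-approximation and may be cancelled simultaneously from both without disturbing commutativity or exactness.

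The principal obstacle will be the splitting step: verifying that $M'$ genuinely splits off $R_1$ (and symmetrically $M$ off $S_1$) and that the two resulting complements may be taken to agree. This requires a careful bookkeeping through the iterated right $\mathcal{C}$-approximations entering the construction after Proposition \ref{jasso}, together with the fact that $\mathcal{C}$, being 2-cluster-tilting, is rich enough (via Theorem \ref{iyamatheorem}) for every fibre product that arises to admit a right $\mathcal{C}$-resolution of the appropriate length.
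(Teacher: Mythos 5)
Your proposal takes essentially the same route as the paper's proof: form the iterated $2$-pullback diagram via the construction following Proposition \ref{jasso}, then split off $M^\prime$ (and symmetrically $M$) by lifting $M^\prime\rightarrow N^\prime$ first through the approximation $P$ of $N\times_X N^\prime$ and then through the approximation of $P\times_{N^\prime}M^\prime$ to obtain a section, which is exactly the paper's key step. The one point of divergence is the ``no common non-zero summands'' clause: you assert that a common summand of $P$ and $Q$ is a redundant approximation choice that can be cancelled, whereas the paper argues that such a summand $Y$ would force every morphism $Y\rightarrow N^\prime$ to factor through $M^\prime$, making $Y$ a summand of $M^\prime$ and yielding a contradiction --- since this condition is used later in the main theorem, your version of that step should be replaced by (or expanded into) an actual argument along those lines.
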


\begin{proof}
Just as in the above method for $d$-pullbacks, we may form the following commutative diagram with exact rows and columns by taking the appropriate right $\mathcal{C}$-approximations of pullbacks: $P,Q,R,S$
$$\begin{tikzcd}
&&&0\arrow{d}&0\arrow{d}\\
&&&L\arrow[equal]{r}\arrow{d}& L\arrow{d}\\
&& Q\arrow{r}\arrow{d}& S \arrow{r}\arrow{d}& M\arrow{d}\arrow{r}&0\\
0\arrow{r}&L^\prime \arrow{r}\arrow[equal]{d}&R\arrow{d}\arrow{r}&P\arrow{r}\arrow{d}&N\arrow{r}\arrow{d}&0\\
0\arrow{r} &L^\prime \arrow{r}&M^\prime \arrow{r}\arrow{d}&N^\prime\arrow{r}\arrow{d}&X\arrow{r}\arrow{d}&0\\
&&0&0&0
\end{tikzcd}$$ 
It suffices to show the surjection $R\twoheadrightarrow M^\prime$ is split (by symmetry, also $S\twoheadrightarrow M$ splits). By the pullback property of $P$, we find $M^\prime\rightarrow N^\prime$ factors through $P$. But the pullback property of $R$ then implies both $M^\prime\rightarrow P$ factors through $R$ and that $M^\prime\rightarrow R\rightarrow M^\prime \cong \mathrm{id}$. Hence $R\cong M^\prime \oplus Q$.

Finally, suppose $P$ and $Q$ have a common summand $Y$. Then any morphism $Y\rightarrow N^\prime$ must factor through $M^\prime$. But as above, $M^\prime \rightarrow N^\prime$ factors through $P$. Hence $Y$ must be a summand of $M^\prime$. This is a contradiction, since $M^\prime$ is already included a summand of $Q\oplus M^\prime$, and hence cannot be a summand of $Q$. This finishes the proof. 
\end{proof}
A second technical result will prove similarly useful.
\begin{lemma}\label{topushout}
In the setting of Lemma \ref{pushout}, in case there exists $P^\prime \in\mathcal{C}$ such that $P\cong P^\prime \oplus M^\prime$, then there is a commutative diagram with exact rows and columns

$$\begin{tikzcd}
&&0\arrow{d}&0\arrow{d}\\
&0\arrow{r}& L\arrow{r}\arrow{d}& M \arrow{dr}\arrow{d}\\
0\arrow{r}&L^\prime \arrow{r}\arrow[equal]{d}&Q  \arrow{d}\arrow{r}&P^\prime\arrow{r}\arrow{d}&N\arrow{r}\arrow{d}&0\\
0\arrow{r} &L^\prime \arrow{r}&M^\prime \arrow{r}&N^\prime\arrow{r}&X\arrow{r}\arrow{d}&0\\
&&&&0
\end{tikzcd}$$ 
\end{lemma}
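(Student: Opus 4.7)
The strategy is to derive the new diagram from that of Lemma \ref{pushout} by exploiting the splitting $P\cong P'\oplus M'$ to cancel an $M'$-summand from the middle row. The starting point is the four-term exact sequence $0\to L'\to Q\oplus M'\xrightarrow{\phi} P'\oplus M'\to N\to 0$ obtained by substituting $P=P'\oplus M'$ in the middle row of Lemma \ref{pushout}. With respect to these decompositions $\phi$ has block form $\phi=\left(\begin{smallmatrix}\alpha & \beta\\ \gamma & \delta\end{smallmatrix}\right)$, with $\alpha\colon Q\to P'$, $\beta\colon M'\to P'$, $\gamma\colon Q\to M'$ and $\delta\colon M'\to M'$.

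The first substantive step is to show that the $M'\to M'$ component $\delta$ can be taken to be the identity. The composition $M'\hookrightarrow Q\oplus M'\xrightarrow{\phi} P$ lifts the given map $M'\to N'$ through the surjection $P\to N'$ by construction of the diagram of Lemma \ref{pushout}; combined with the Krull--Schmidt property of the $2$-cluster-tilting subcategory $\mathcal{C}$ and the no-common-summand conclusion of Lemma \ref{pushout}, if $\delta$ were not an isomorphism then Fitting's lemma would allow a common summand to be split off of the source $Q\oplus M'$ and the target $P'\oplus M'$ of $\phi$, yielding a common summand of $Q$ and $P$ and contradicting Lemma \ref{pushout}. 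After post-composing the isomorphism $P\cong P'\oplus M'$ with $\delta^{-1}$ on the $M'$-factor, we may therefore assume $\delta=\mathrm{id}_{M'}$.

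Once $\delta=\mathrm{id}$, the matrix identity
\[\begin{pmatrix}\alpha & \beta\\ \gamma & \mathrm{id}\end{pmatrix}=\begin{pmatrix}\mathrm{id} & \beta\\ 0 & \mathrm{id}\end{pmatrix}\begin{pmatrix}\phi_1 & 0\\ 0 & \mathrm{id}\end{pmatrix}\begin{pmatrix}\mathrm{id} & 0\\ \gamma & \mathrm{id}\end{pmatrix},\qquad \phi_1:=\alpha-\beta\gamma,\]
exhibits $\phi$ as conjugate to the block-diagonal map $\phi_1\oplus\mathrm{id}_{M'}$ via automorphisms of $Q\oplus M'$ and $P'\oplus M'$; since such automorphisms preserve kernels and cokernels, the four-term sequence reduces to $0\to L'\to Q\xrightarrow{\phi_1}P'\to N\to 0$, which is the middle row of the desired diagram. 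Transporting the same change-of-basis through the remaining parts of Lemma \ref{pushout} yields the short exact columns $0\to L\to Q\to M'\to 0$ (from the column $Q\to Q\oplus M'\to M'\to 0$ of Lemma \ref{pushout}, combined with standard diagram chasing to identify the kernel as $L$) and $0\to M\to P'\to N'\to 0$ (from the column $0\to L\to Q\oplus M\to P\to N'\to 0$ after projecting to the $P'$-summand); the diagonal $M\to N$ is inherited as the composition $M\to P\to N$. Commutativity and exactness in the new diagram then follow from those in Lemma \ref{pushout} together with the invariance of kernels and cokernels under the diagonalising automorphisms.

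The principal obstacle is verifying invertibility of $\delta$ using the no-common-summand property of Lemma \ref{pushout}; once this is secured, everything else is a routine matrix-cancellation exercise tracked column by column through the original diagram.
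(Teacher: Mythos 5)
Your overall strategy---take the four-term exact sequence through $P$, use the splitting $P\cong P'\oplus M'$ to cancel an $M'$-summand by a triangular change of basis, and redistribute what remains into the asserted diagram---is close in spirit to the paper's own (very terse) proof. The paper performs the analogous ``transcription'' on the column $0\to L\to M\oplus Q\to P\to N'\to 0$ of Lemma \ref{pushout} rather than on the row $0\to L'\to Q\oplus M'\to P\to N\to 0$, and separately records that $L'\to M'$ factors through $Q$, but the mechanism is the same. Your matrix identity and the reduction to $\phi_1=\alpha-\beta\gamma$ once $\delta=\mathrm{id}_{M'}$ are correct, and you are right that everything downstream of the invertibility of $\delta$ is routine.

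The gap is precisely at the step you flag as the crux. You argue that if $\delta$ were not an isomorphism, Krull--Schmidt/Fitting would split a common summand off the source $Q\oplus M'$ and target $P'\oplus M'$, ``yielding a common summand of $Q$ and $P$'' and contradicting Lemma \ref{pushout}. That implication fails. The maximal summand $U$ on which $\phi$ restricts to an isomorphism is indeed a common direct summand of $Q\oplus M'$ and $P'\oplus M'=P$; but since $Q$ and $P$ share no summands, all this yields is that $U$ is isomorphic to a summand of $M'$---which is no contradiction, as $M'$ itself is a common summand of $Q\oplus M'$ and $P$ by hypothesis, and Lemma \ref{pushout} says nothing about that. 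What you actually need is the opposite inequality, namely that $U$ exhausts $M'$; if $U\subsetneq M'$, writing $M'\cong U\oplus M''$ leaves a radical map $Q\oplus M''\to P'\oplus M''$ with kernel $L'$ and cokernel $N$, and nothing you have cited rules this out. To close the gap you must use the extra structure of the maps rather than the bare isomorphism $P\cong P'\oplus M'$: for instance, that the section $M'\to Q\oplus M'$ followed by $\phi$ and by $P\to N'$ recovers the original map $M'\to N'$, or (as in the paper's application of this lemma) that the splitting of $P$ arises from the vanishing of a specific $\mathrm{Ext}^2$-class along the column through $P$ and is therefore compatible with the maps by construction. As written, the cancellation step is unjustified.
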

\begin{proof}
In this case we know that the morphism $L^\prime\rightarrow M^\prime$ factors through $Q$. Since we are dealing with a $2$-pullback diagram, the $2$-exact sequence:
$$0\rightarrow L\rightarrow M\oplus Q\rightarrow P^\prime \oplus M^\prime \rightarrow N^\prime\rightarrow 0$$ may simply be transcribed into a commutative diagram as stated.
\end{proof}
\section{Main results}
In this section let $\mathcal{C}\subset \mathrm{mod}A$ be a fixed $2$-cluster-tilting subcategory.
\begin{definition}Define a subcategory $\mathcal{X}\subseteq\mathcal{C}$ to be \emph{$2$-contravariantly finite in $\mathcal{C}$} if for any $M\in\mathcal{C}$ there exists a right $\mathcal{X}$-approximation $X_1\rightarrow M$ and object $X_2\in\mathcal{X}$ and a sequence 
$ X_2\rightarrow X_1\rightarrow M$ on which $\mathrm{Hom}(C,-)$ is exact for all $C\in \mathcal{C}$. 

Dually, define a subcategory $\mathcal{X}\subseteq\mathcal{C}$ to be \emph{$2$-covariantly finite in $\mathcal{C}$} if for any $M\in\mathcal{C}$ there exists a left $\mathcal{X}$-approximation $M\rightarrow X_1$ and object $X_2\in\mathcal{X}$ and a sequence 
$M \rightarrow X_1\rightarrow X_2$ on which $\mathrm{Hom}(-,C)$ is exact for all $C\in \mathcal{C}$.
A subcategory $\mathcal{X}\subseteq\mathcal{C}$ will be said to be \emph{$2$-functorially finite in $\mathcal{C}$} if it is both $2$-contravariantly finite in $\mathcal{C}$ and $2$-covariantly finite in $\mathcal{C}$. 
\end{definition}

\begin{lemma}
Let $\mathcal{X}\subseteq\mathcal{C}$ be functorially finite. Then $\mathcal{X}$ is 2-covariantly finite in $\mathcal{C}$ if and only if it is 2-contravariantly finite in $\mathcal{C}$.
\end{lemma}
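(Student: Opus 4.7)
The statement is symmetric under the $K$-duality $D=\mathrm{Hom}_K(-,K)$, which exchanges $2$-contravariant and $2$-covariant finiteness while preserving functorial finiteness, so it suffices to prove that if $\mathcal{X}$ is functorially finite in $\mathcal{C}$ and $2$-contravariantly finite in $\mathcal{C}$, then $\mathcal{X}$ is $2$-covariantly finite in $\mathcal{C}$. Given $M\in\mathcal{C}$, the plan is to start with a left $\mathcal{X}$-approximation $\alpha\colon M\to X_1$ (obtained from functorial finiteness) and then to construct a second step $X_1\to X_2$ by combining the $2$-contravariant structure on the first cokernel term with an adjustment that forces the composition with $\alpha$ to vanish literally, not merely after post-composition.

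First, take the $2$-cokernel of $\alpha$ in the $2$-abelian category $\mathcal{C}$: a sequence $X_1\xrightarrow{p}Y\to Z$ along which $\mathrm{Hom}(-,C)$ is exact for every $C\in\mathcal{C}$; in particular $p\alpha=0$. Next, apply the $2$-contravariant finiteness hypothesis to $Y$, producing $\tilde{X}\xrightarrow{t}X_2\xrightarrow{q}Y$ in $\mathcal{X}$ with $q$ a right $\mathcal{X}$-approximation and $\mathrm{Hom}(C,-)$ exact on the sequence. Since $X_1\in\mathcal{X}$, factor $p=qs$ for some $s\colon X_1\to X_2$. The naive candidate $M\xrightarrow{\alpha}X_1\xrightarrow{s}X_2$ typically fails because $s\alpha$ need not vanish---it only satisfies $q(s\alpha)=0$. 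To repair this, $2$-contravariant exactness at $C=M$ gives $\beta\colon M\to\tilde{X}$ with $s\alpha=t\beta$; the left $\mathcal{X}$-approximation property of $\alpha$ then yields $\gamma\colon X_1\to\tilde{X}$ with $\beta=\gamma\alpha$; setting $s':=s-t\gamma$ produces $s'\alpha=0$.

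It remains to verify that $M\xrightarrow{\alpha}X_1\xrightarrow{s'}X_2$ is exact on $\mathrm{Hom}(-,C)$ for every $C\in\mathcal{C}$. For $g\colon X_1\to C$ with $g\alpha=0$, the $2$-cokernel property provides $g'\colon Y\to C$ with $g=g'p=g'qs$, and since $qt=0$ (evaluate the $2$-contravariant sequence at $\mathrm{id}_{\tilde{X}}$), the map $h:=g'q$ satisfies $hs'=g-g'(qt)\gamma=g$. The main obstacle is precisely the subtraction trick $s\mapsto s-t\gamma$, which promotes a Hom-level vanishing to a literal zero composition; this is the single step where both the $2$-contravariant finiteness of $\mathcal{X}$ (used to produce $\beta$) and the ordinary functorial finiteness of $\mathcal{X}$ (used to lift $\beta$ back along $\alpha$) are invoked together, and it is what makes functorial finiteness the right auxiliary hypothesis for the equivalence.
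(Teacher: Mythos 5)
Your argument is correct, and it is genuinely different in structure from the paper's. The paper also reduces to one implication and starts from a left $\mathcal{X}$-approximation $f\colon M\to X_0$, but it then applies the $2$-contravariant finiteness data directly to $\mathrm{coker}f$ (an object of $\mathrm{mod}A$ that need not lie in $\mathcal{C}$, so strictly outside the scope of the definition as stated) and runs a short proof by contradiction whose conclusion is essentially that the right $\mathcal{X}$-approximation of $\mathrm{coker}f$ cannot fail to absorb the map $X_0\to\mathrm{coker}f$; the required second term of the left sequence is never exhibited explicitly. You instead interpose the $2$-cokernel $X_1\to Y\to Z$ of $\alpha$ inside $\mathcal{C}$ --- whose existence you should justify not by an appeal to ``$2$-abelianness'' (the paper never invokes this) but directly from Theorem \ref{iyamatheorem}(2), taking a left $\mathcal{C}$-resolution of $\mathrm{coker}\alpha$ --- so that $2$-contravariant finiteness is applied to an honest object $Y\in\mathcal{C}$, and you then produce the map $s'=s-t\gamma$ explicitly and check the $\mathrm{Hom}(-,C)$-exactness by hand using $qt=0$. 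The correction term $s\mapsto s-t\gamma$, which upgrades the Hom-level vanishing $qs\alpha=0$ to the literal relation $s'\alpha=0$ by combining the $2$-contravariant sequence at $Y$ with the left approximation property of $\alpha$, is the step the paper's contradiction argument leaves implicit, and it is exactly where both hypotheses of the lemma are used. Your version buys a constructive and fully checkable proof at the cost of one extra input (the existence of $2$-cokernels in $\mathcal{C}$), which is in any case guaranteed by the resolution theorem already quoted in the paper.
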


\begin{proof}
It suffices to show any subcategory $\mathcal{X}$ 2-contravariantly finite in $\mathcal{C}$ is also $2$-covariantly finite in $\mathcal{C}$. So suppose $\mathcal{X}$ is 2-contravariantly finite in $\mathcal{C}$. Let $f:M\rightarrow X_0$ be a left $\mathcal{X}$ approximation and let $X_1^\prime \rightarrow X_0^\prime \rightarrow \mathrm{coker}f$ be a right $\mathcal{X}$-approximation. We have that $X_0\rightarrow \mathrm{coker}f$ factors through $X_1^\prime$, and since $X_0^\prime \not\cong\mathrm{coker}f$, this induces an additional non-zero morphism $M\rightarrow X_0^\prime$. The sequence $M\rightarrow X_0^\prime \rightarrow \mathrm{coker}f$ is now zero, and hence induces a morphism $g:M\rightarrow X_1^\prime$. Since $f:M\rightarrow X_0$ is a left $\mathcal{X}$-approximation, we have that $g$ factors through $X_0$. But this means $X_0\rightarrow \mathrm{coker}f$ factors through $X_1^\prime\rightarrow X_0^\prime$ and is hence zero, a contradiction. This is illustrated below:
$$\begin{tikzcd}
M \arrow{r}{f}\arrow[dotted]{d}{g}&X_0\arrow{r}\arrow[dotted]{d}\arrow[dotted]{dl}&\mathrm{coker}f\arrow[equals]{d}\\
X_1^\prime \arrow{r}&X_0^\prime\arrow{r}&\mathrm{coker}f
\end{tikzcd}$$
\end{proof}

Recall that for a finite-dimensional algebra $A$, then a \emph{torsion pair in $\mathrm{mod}A$} consists of two subcategories $(\mathcal{T},\mathcal{F})$ such that 
\begin{itemize}
\item $\mathrm{Hom}_A(T,F)=0$ for all $T\in\mathcal{T}$, $F\in\mathcal{F}$.
\item $\mathcal{T}=\{X\in\mathrm{mod}A|\mathrm{Hom}_A(X,F)=0\ \forall\ F\in\mathcal{F}\}$.
\item  $\mathcal{F}=\{Y\in\mathrm{mod}A|\mathrm{Hom}_A(T,Y)=0\ \forall\ T\in\mathcal{T}\}$.
\end{itemize}

We may now define one of our primary objects of study.
\begin{definition}
A \emph{2-functorially-finite torsion pair in $\mathcal{C}$} consists of two subcategories $(\mathcal{T},\mathcal{F})$ each $2$-functorially finite in $\mathcal{C}$ and such that 
\begin{itemize}
\item $\mathrm{Hom}_A(T,F)=0$ for all $T\in\mathcal{T}$, $F\in\mathcal{F}$.
\item $\mathcal{T}=\{X\in\mathcal{C}|\mathrm{Hom}_A(X,F)=0\ \forall\ F\in\mathcal{F}\}$.
\item  $\mathcal{F}=\{Y\in\mathcal{C}|\mathrm{Hom}_A(T,Y)=0\ \forall\ T\in\mathcal{T}\}$.
\end{itemize}
\end{definition}
For a $2$-functorially-finite torsion pair $(\mathcal{T},\mathcal{F})$ in $\mathcal{C}$, we say $\mathcal{T}$ is a \emph{torsion class} and $\mathcal{F}$ a \emph{torsion-free class}.

\begin{lemma}\label{torspair}
For any 2-functorially-finite torsion pair $(\mathcal{T},\mathcal{F})$ in $\mathcal{C}$ then $(\mathrm{Fac}\mathcal{T},\mathrm{Sub}\mathcal{F})$ is a torsion pair in $\mathrm{mod}A$.
\end{lemma}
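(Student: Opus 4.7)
The plan is to verify the three conditions defining a torsion pair for $(\mathrm{Fac}\mathcal{T},\mathrm{Sub}\mathcal{F})$ in $\mathrm{mod}A$. The Hom-vanishing $\mathrm{Hom}_A(\mathrm{Fac}\mathcal{T},\mathrm{Sub}\mathcal{F})=0$ is routine: for an epimorphism $\pi: T \twoheadrightarrow T'$ with $T\in\mathcal{T}$ and a monomorphism $\iota: F'\hookrightarrow F$ with $F\in\mathcal{F}$, any morphism $f: T'\to F'$ composes to $\iota f \pi: T\to F$, which vanishes by the torsion pair in $\mathcal{C}$; the surjectivity of $\pi$ and injectivity of $\iota$ then force $f=0$.

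Both characterization axioms will follow from a torsion decomposition: for every $M \in \mathrm{mod}A$, a short exact sequence $0\to tM \to M \to fM \to 0$ with $tM\in\mathrm{Fac}\mathcal{T}$ and $fM\in\mathrm{Sub}\mathcal{F}$. Indeed, given such a decomposition, if $\mathrm{Hom}_A(X,\mathcal{F})=0$ then the composite $X \twoheadrightarrow fX \hookrightarrow F$ is zero, forcing $fX=0$ and $X = tX \in \mathrm{Fac}\mathcal{T}$; the dual conclusion yields $\mathrm{Sub}\mathcal{F}$, and the observation $\mathrm{Hom}_A(X,\mathrm{Sub}\mathcal{F})=0 \Leftrightarrow \mathrm{Hom}_A(X,\mathcal{F})=0$ (and dually) closes the reduction.

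The core sub-claim is that each $C\in\mathcal{C}$ admits such a decomposition $0\to tC \to C \to fC \to 0$. I would construct it from the right $\mathcal{T}$-approximation $h: T^0 \to C$ and the left $\mathcal{F}$-approximation $g: C \to F^0$ provided by 2-functorial finiteness of $\mathcal{T}$ and $\mathcal{F}$ (with their Hom-exact continuations $T^1 \to T^0 \to C$ and $C \to F^0 \to F^1$). Since $\mathrm{Hom}_A(\mathcal{T},\mathcal{F})=0$, $\mathrm{Im}(h) \subseteq \ker g$. The main obstacle, and the step I expect to be the hardest, is proving the reverse inclusion $\ker g \subseteq \mathrm{Im}(h)$, which would identify $\ker g = \mathrm{Im}(h)$ and yield the desired SES with $tC := \mathrm{Im}(h) \in \mathrm{Fac}\mathcal{T}$ (a quotient of $T^0 \in\mathcal{T}$) and $fC := \mathrm{Im}(g) \in \mathrm{Sub}\mathcal{F}$ (a subobject of $F^0 \in\mathcal{F}$). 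I expect this identification to be established via the 2-pullback construction of Proposition~\ref{jasso} combined with Lemma~\ref{topushout}, which serve precisely to reconcile the two Hom-exact approximation sequences meeting at $C$.

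The general case is obtained by lifting the $\mathcal{C}$-case through a $\mathcal{C}$-resolution: for $M \in \mathrm{mod}A$, take the right $\mathcal{C}$-approximation $0 \to C_1 \to C_0 \to M \to 0$ from Theorem~\ref{iyamatheorem} and apply the decomposition to both $C_0$ and $C_1$. A $3\times 3$ diagram chase then produces the torsion decomposition of $M$: $tM$ emerges as a quotient of $tC_0 \in \mathrm{Fac}\mathcal{T}$ (hence itself in $\mathrm{Fac}\mathcal{T}$ by closure under quotients), and $fM$ is identified (via the comparison between left $\mathcal{F}$-approximations of $C_0$ and $C_1$, together with a further invocation of the key identification) as a subobject of an object of $\mathcal{F}$, so $fM\in\mathrm{Sub}\mathcal{F}$. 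Feeding this decomposition back into the reduction of the second paragraph completes the verification.
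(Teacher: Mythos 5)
Your overall framework (Hom-vanishing plus a torsion decomposition for every module implies a torsion pair) is sound, but the proposal leaves its load-bearing step unproved. The ``core sub-claim'' that $\ker g=\mathrm{Im}(h)$ for $C\in\mathcal{C}$ --- i.e.\ that the trace of $\mathcal{T}$ in $C$ coincides with the reject of $\mathcal{F}$ in $C$ --- is exactly the content of Proposition~\ref{prop1}~(i)$\implies$(ii), and in the paper that proposition is \emph{deduced from} Lemma~\ref{torspair}: the paper first establishes the torsion pair in $\mathrm{mod}A$, then obtains the canonical sequence $0\to tM\to M\to M/tM\to 0$ from the classical theory and refines it by approximations. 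You are running the implication in the opposite direction, so you cannot appeal to that proposition, and your only justification is ``I expect this to be established via Proposition~\ref{jasso} and Lemma~\ref{topushout}.'' Nothing in those statements delivers the inclusion $\ker g\subseteq\mathrm{Im}(h)$ as they stand; the orthogonality hypotheses you have are only internal to $\mathcal{C}$, and bridging them to a statement about arbitrary submodules of $C$ is precisely where all the difficulty lies. The subsequent lifting from $\mathcal{C}$ to $\mathrm{mod}A$ by a $3\times 3$ chase inherits this gap and is itself only sketched.

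For comparison, the paper avoids torsion decompositions entirely. It observes that $\mathrm{Hom}_A(\mathrm{Fac}\mathcal{T},X)=0$ iff $\mathrm{Hom}_A(\mathcal{T},X)=0$, and that the left $\mathcal{C}$-approximation $f:X\to C_X$ is injective (since $DA\in\mathcal{C}$), so that $\mathrm{Hom}_A(\mathcal{T},X)=0$ iff $\mathrm{Hom}_A(\mathcal{T},C_X)=0$ iff $C_X\in\mathcal{F}$ iff $X\in\mathrm{Sub}\mathcal{F}$; dually for the torsion side. This identifies the orthogonal classes directly and is both shorter and logically prior to the decomposition you are trying to build. I would recommend either adopting that route, or, if you want to keep your architecture, supplying a complete and non-circular proof that $\ker g\subseteq\mathrm{Im}(h)$ --- as written, the proposal does not constitute a proof.
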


\begin{proof}
Suppose $(\mathcal{T},\mathcal{F})$ is a 2-functorially-finite torsion pair in $\mathcal{C}$. Clearly any $X\in\mathrm{mod}A$ satisfies that $\mathrm{Hom}_A(\mathrm{Fac}\mathcal{T},X)=0$ if and only if $\mathrm{Hom}_A(\mathcal{T},X)=0$. Let $f:X\rightarrow C_X$ be a left $\mathcal{C}$-approximation of $X$, moreover $f$ must be injective. So $\mathrm{Hom}_A(\mathcal{T},X)=0$ if and only if $\mathrm{Hom}_A(\mathcal{T},C_X)=0$ and hence whenever $C_X\in\mathcal{F}$ and $X\in\mathrm{Sub}\mathcal{F}$. The result follows.
\end{proof}

\begin{proposition}\label{prop1}
Let $A$ be a finite-dimensional algebra and $\mathcal{C}\subseteq \mathrm{mod}A$ a 2-cluster-tilting subcategory. Let $\mathcal{T},\mathcal{F}\subseteq\mathcal{C}$ be 2-functorially-finite subcategories in $\mathcal{C}$. The following are equivalent.
 \begin{enumerate}[(i)]
\item  $(\mathcal{T},\mathcal{F})$ is a 2-functorially-finite torsion pair in $\mathcal{C}$.
\item For every $M\in\mathcal{C}$ there is an exact sequence $$T_M\rightarrow M\rightarrow F_M$$ such that $M\rightarrow F_M$ is a left $\mathcal{F}$-approximation and $T_M\rightarrow M$ is a right $\mathcal{T}$-approximation.
\end{enumerate}
\end{proposition}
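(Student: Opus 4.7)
My plan is to treat the two implications separately, with Lemma~\ref{torspair} serving as the bridge via the classical torsion pair $(\mathrm{Fac}\mathcal{T},\mathrm{Sub}\mathcal{F})$ in $\mathrm{mod}A$.

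For (i)$\Rightarrow$(ii), given $M\in\mathcal{C}$, I would select a right $\mathcal{T}$-approximation $t\colon T_M\to M$ and a left $\mathcal{F}$-approximation $f\colon M\to F_M$ from $2$-functorial finiteness. Since $\mathrm{Hom}_A(\mathcal{T},\mathcal{F})=0$, the composition $ft$ vanishes, so $\mathrm{Im}(t)\subseteq\ker(f)$. For the reverse inclusion, Lemma~\ref{torspair} supplies a short exact sequence $0\to tM\to M\to M/tM\to 0$ with $tM\in\mathrm{Fac}\mathcal{T}$ and $M/tM\in\mathrm{Sub}\mathcal{F}$. Any morphism $T\to M$ with $T\in\mathcal{T}$ must land in $tM$, forcing $\mathrm{Im}(t)\subseteq tM$; picking a surjection $T'\twoheadrightarrow tM$ with $T'\in\mathcal{T}$ and noting that $T'\to tM\hookrightarrow M$ factors through $t$, I obtain $tM\subseteq\mathrm{Im}(t)$. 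A dual computation using the embedding $M/tM\hookrightarrow F$ for some $F\in\mathcal{F}$ yields $\ker(f)=tM$, so $\mathrm{Im}(t)=\ker(f)$ and the sequence $T_M\to M\to F_M$ is exact in the middle.

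For (ii)$\Rightarrow$(i), the orthogonality $\mathrm{Hom}_A(T,F)=0$ follows by applying (ii) to $M=T\in\mathcal{T}$: since $\mathrm{id}_T$ factors through the right $\mathcal{T}$-approximation $T_T\to T$, the latter is split surjective, so exactness forces $T\to F_T=0$, and any $T\to F$ factors through $F_T$ and vanishes. For the characterisation $\mathcal{T}=\{X\in\mathcal{C}\mid\mathrm{Hom}_A(X,\mathcal{F})=0\}$, the inclusion $\subseteq$ is immediate; for the reverse, given $X\in\mathcal{C}$ with $\mathrm{Hom}_A(X,\mathcal{F})=0$, the approximation sequence $T_X\to X\to F_X$ forces $X\to F_X=0$, hence $t\colon T_X\to X$ is surjective. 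I then apply the $\mathrm{Hom}(\mathcal{C},-)$-exactness of the $2$-contravariantly-finite sequence $T_X^{(2)}\to T_X\to X$ with $C=X$ to get $\mathrm{Hom}(X,T_X)\twoheadrightarrow\mathrm{Hom}(X,X)$, so $\mathrm{id}_X$ lifts to a section of $t$. This exhibits $X$ as a direct summand of $T_X\in\mathcal{T}$, so $X\in\mathcal{T}$ by closure under summands. The characterisation of $\mathcal{F}$ is dual.

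The main subtlety in (i)$\Rightarrow$(ii) is that identifying $\mathrm{Im}(t)=\ker(f)$ genuinely requires the external torsion pair of Lemma~\ref{torspair}: the universal properties of the approximations alone give one inclusion but not the other, and one must identify both sides with the single submodule $tM$ coming from the torsion-filtration of $M$ in $\mathrm{mod}A$. In (ii)$\Rightarrow$(i) the cleanest conceptual point is that $2$-contravariant finiteness gives enough Hom-surjectivity to lift $\mathrm{id}_X$ when $X$ is $\mathcal{F}$-null, which bypasses any explicit closure-under-$\mathcal{C}$-quotients assumption on $\mathcal{T}$; this is where the specifically $2$-dimensional hypothesis is essential and where the technical machinery of $d$-pushouts from Lemmas~\ref{pushout}--\ref{topushout} may be needed as a backup if the direct Hom-lift argument requires a more delicate diagram chase.
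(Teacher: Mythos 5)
Your argument for (i)$\Rightarrow$(ii) is correct and is essentially the paper's own proof: the paper invokes Lemma \ref{torspair} to obtain the torsion pair $(\mathrm{Fac}\mathcal{T},\mathrm{Sub}\mathcal{F})$ in $\mathrm{mod}A$, takes the canonical sequence $0\to tM\to M\to M/tM\to 0$, and says only that ``by taking appropriate approximations we obtain the result''; your identification of both $\mathrm{Im}(t)$ and $\ker(f)$ with the torsion submodule $tM$ is exactly the content of that sentence, spelled out correctly.

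The gap is in (ii)$\Rightarrow$(i), at the step where you lift $\mathrm{id}_X$ through the surjective right $\mathcal{T}$-approximation $t\colon T_X\to X$. You claim that the $\mathrm{Hom}(\mathcal{C},-)$-exactness of the sequence $T_X^{(2)}\to T_X\to X$ yields a surjection $\mathrm{Hom}_A(X,T_X)\twoheadrightarrow\mathrm{Hom}_A(X,X)$. But exactness of a three-term sequence under $\mathrm{Hom}_A(C,-)$ can only mean exactness at the middle term $\mathrm{Hom}_A(C,T_X)$; the approximation property of $T_X\to X$ gives surjectivity of $\mathrm{Hom}_A(C,T_X)\to\mathrm{Hom}_A(C,X)$ only for $C\in\mathcal{T}$, and $X$ is precisely not yet known to lie in $\mathcal{T}$. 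Indeed, if $2$-contravariant finiteness forced $\mathrm{Hom}_A(C,T_M)\to\mathrm{Hom}_A(C,M)$ to be surjective for every $C\in\mathcal{C}$, then taking $C=M$ would exhibit every $M\in\mathcal{C}$ as a direct summand of an object of $\mathcal{T}$, i.e.\ $\mathcal{T}=\mathcal{C}$; so that reading of the definition is untenable and the lift of $\mathrm{id}_X$ is unjustified. What your argument actually establishes is only that such an $X$ lies in $\mathrm{Fac}\mathcal{T}\cap\mathcal{C}$, and passing from there to $X\in\mathcal{T}$ requires closure of $\mathcal{T}$ under quotients inside $\mathcal{C}$ --- the hypothesis that appears in Proposition \ref{prop2}, not here. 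The same issue recurs in the dual claim for $\mathcal{F}$. To be fair, the paper dismisses this direction with ``this is trivial'' and supplies no argument, so you are attempting more than the source provides; your orthogonality step ($T\to F_T=0$ because $T_T\to T$ is split epi, hence any $T\to F$ vanishes) is fine, but the maximality of $\mathcal{T}$ and $\mathcal{F}$ is not closed by the mechanism you propose.
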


\begin{proof}
(i)$\implies$ (ii): Given $M\in\mathcal{C}$, and a 2-functorially-finite torsion pair $(\mathcal{T},\mathcal{F})$ in $\mathcal{C}$, if follows from Lemma \ref{torspair} that $(\mathrm{Fac}\mathcal{T}, \mathrm{Sub}\mathcal{F})$ is a torsion pair in $\mathrm{mod}A$. A classical property of torsion pairs (see for example \cite[Proposition V1.1.5]{ass}) there is an exact sequence in $\mathrm{mod}A$: $0\rightarrow T\rightarrow M\rightarrow F \rightarrow 0$ for some $T\in\mathrm{Fac}\mathcal{T} $ and $F\in\mathrm{Sub}\mathcal{F}$. By taking appropriate approximations we obtain the result. 

(ii)$\implies$ (i) This is trivial.
\end{proof}

We may characterise torsion classes in $\mathcal{C}$.

\begin{proposition}\label{prop2}
For a 2-functorially-finite subcategory $\mathcal{T}\subseteq \mathcal{C}$, the following are equivalent:
 \begin{enumerate}[(i)]
\item There is an inclusion $(\mathrm{Fac}\mathcal{T}\cap\mathcal{C})\subseteq\mathcal{T}$ and for every $d$-exact sequence in $\mathcal{C}$ with $T_0,T_3\in\mathcal{T}$:$$0\rightarrow T_0\rightarrow X\rightarrow Y\rightarrow T_3\rightarrow 0$$ 
there exists a $d$-pushout diagram with exact rows:
$$\begin{tikzcd} 0 \arrow{r} & T_0^\prime  \arrow{r}\arrow{d} & T_1^\prime \arrow{r}\arrow{d} & T_2^\prime\arrow{r}\arrow{d} &T_3 \arrow[equals]{d}\arrow{r}&0\\
0 \arrow{r} & T_0\arrow{r} & X\arrow{r} & Y\arrow{r} &T_3\arrow{r}&0
\end{tikzcd}$$
such that $T_0^\prime,T_1^\prime,T_2^\prime\in\mathcal{T}$.
\item $\mathcal{T}$ is the torsion part of a 2-functorially-finite torsion pair in $\mathcal{C}$.
\end{enumerate}
\end{proposition}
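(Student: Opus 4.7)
The strategy is to mirror the classical Smal{\o}--Bakke characterization of torsion classes, replacing short exact sequences by 2-exact sequences and the usual closure under extensions by the 2-pushout hypothesis of (i).

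For the direction (ii)$\implies$(i), suppose $(\mathcal{T},\mathcal{F})$ is a 2-functorially-finite torsion pair. The inclusion $\mathrm{Fac}\mathcal{T}\cap\mathcal{C}\subseteq\mathcal{T}$ is immediate: any surjection $T\twoheadrightarrow X$ with $T\in\mathcal{T}$ forces every morphism $X\to F$ with $F\in\mathcal{F}$ to vanish, since its precomposition with the surjection lies in $\mathrm{Hom}_A(T,F)=0$. For the pushout diagram associated to $0\to T_0\to X\to Y\to T_3\to 0$, I would take a right $\mathcal{T}$-approximation $T_Y\to Y$ (supplied by Proposition \ref{prop1}), form the composite $T_Y\to Y\to T_3$, and apply Proposition \ref{jasso} to pull the sequence back along this composite, producing a 2-exact sequence in $\mathcal{C}$ ending in $T_Y$. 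A further right $\mathcal{T}$-approximation of the middle pullback term, together with 2-contravariant finiteness of $\mathcal{T}$ in $\mathcal{C}$, then refines this to a 2-exact sequence in $\mathcal{T}$ admitting the desired chain map to the original.

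For the direction (i)$\implies$(ii), define $\mathcal{F}:=\{F\in\mathcal{C}\mid\mathrm{Hom}_A(T,F)=0\text{ for all }T\in\mathcal{T}\}$. The central task is to construct, for every $M\in\mathcal{C}$, a 2-exact sequence
\[0\to T^{(2)}\to T^{(1)}\to M\to F_M\to 0\]
in $\mathcal{C}$ with $T^{(i)}\in\mathcal{T}$ and $F_M\in\mathcal{F}$. The beginning $T^{(2)}\to T^{(1)}\to M$ is given by 2-contravariant finiteness of $\mathcal{T}$, and Theorem \ref{iyamatheorem} supplies the 2-cokernel $F_M\in\mathcal{C}$ after splicing with a right $\mathcal{C}$-resolution. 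The key step is then to verify $F_M\in\mathcal{F}$: given any $T\to F_M$ with $T\in\mathcal{T}$, Proposition \ref{jasso} yields a 2-pullback sequence $0\to T^{(2)}\to E_1\to E_2\to T\to 0$ with a chain map to the original sequence that is the identity on $T^{(2)}$ and $T\to F_M$ on the right. By the pushout hypothesis in (i), this pullback sequence admits a chain map from some 2-exact sequence $0\to T_0'\to T_1'\to T_2'\to T\to 0$ with $T_i'\in\mathcal{T}$, acting as the identity on $T$. Composing yields a chain map into the original sequence, and since $T_2'\in\mathcal{T}$ the morphism $T_2'\to M$ factors through the right $\mathcal{T}$-approximation $T^{(1)}\to M$. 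The composite $T_2'\to T\to F_M$ therefore equals $T_2'\to T^{(1)}\to M\to F_M$, which vanishes by 2-exactness of the original sequence. Since $T_2'\to T$ is surjective, we conclude $T\to F_M=0$, and Proposition \ref{prop1} then delivers the 2-functorially-finite torsion pair; 2-covariant finiteness of $\mathcal{F}$ follows by a dual argument.

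The principal obstacle will be the construction of the 2-exact sequence $0\to T^{(2)}\to T^{(1)}\to M\to F_M\to 0$ with $F_M$ forced into $\mathcal{C}$, since the naive cokernel of the right $\mathcal{T}$-approximation $T^{(1)}\to M$ in $\mathrm{mod}A$ need not lie in $\mathcal{C}$. This requires splicing with the right $\mathcal{C}$-resolution of Theorem \ref{iyamatheorem}, potentially enlarging $T^{(1)}$ by summands mapping trivially to $M$ to align the resulting sequence with the 2-exact framework. Once this is in hand, the Smal{\o}--Bakke-style argument above proceeds essentially formally, with the 2-pushout closure condition replacing the usual closure under extensions.
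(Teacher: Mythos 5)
Your direction (i)$\implies$(ii) is essentially the paper's argument, and in one respect it is set up more carefully: you arrange the four-term sequence $0\to T^{(2)}\to T^{(1)}\to M\to F_M\to 0$ so that its left end lies in $\mathcal{T}$ before invoking hypothesis (i) on the pullback along $T\to F_M$, whereas the paper applies (i) to a sequence whose left end is $\ker(T_M\to M)$. The core mechanism --- pull back along a test map $T\to F_M$, use (i) to replace the pullback by a sequence in $\mathcal{T}$, observe that $T_2'\to M$ must factor through the right $\mathcal{T}$-approximation so that $T_2'\to T\to F_M$ vanishes, and conclude from surjectivity of $T_2'\to T$ --- is identical to the paper's. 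The splicing needed to realise $F_M$ inside $\mathcal{C}$, which you flag as the principal obstacle, is indeed a point the paper sidesteps by working with the plain cokernel in $\mathrm{mod}A$.

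The genuine gap is in (ii)$\implies$(i). Pulling the sequence $0\to T_0\to X\to Y\to T_3\to 0$ back along the composite $T_Y\to Y\to T_3$ produces a $2$-exact sequence ending in $T_Y$, whose chain map to the original has $T_Y\to T_3$ as its rightmost component; but the required diagram must end in $T_3$ itself with the \emph{identity} as the rightmost vertical map. No amount of further right $\mathcal{T}$-approximation of the middle terms converts one into the other, and the obvious repair --- showing that $T_Y\to Y\to T_3$ is surjective so that one can rebuild a sequence ending in $T_3$ --- is not automatic: for a torsion pair, a torsion-free quotient of $Y$ can surject onto a nonzero torsion module (already for $kA_2$ with $(\mathcal{T},\mathcal{F})=(\mathrm{add}\,S_1,\mathrm{add}(S_2\oplus P_1))$, the torsion-free $P_1$ surjects onto the torsion $S_1$), so the image of the torsion part of $Y$ in $T_3$ need not be all of $T_3$. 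The surjectivity does hold here, but only because of the extra input you omit: applying $\mathrm{Hom}_A(-,F)$ to the given $2$-exact sequence yields $\mathrm{Hom}_A(Y,F)\cong\mathrm{Hom}_A(X,F)$ for $F\in\mathrm{Sub}\mathcal{F}$, hence $\mathrm{Im}(X\to F_X)\cong\mathrm{Im}(Y\to F_Y)$, which forces $Y=\mathrm{Im}(T_Y\to Y)+\mathrm{Im}(X\to Y)$ and therefore $T_Y\twoheadrightarrow T_3$. This comparison of the canonical sequences of $X$ and $Y$, assembled via Lemmas \ref{pushout} and \ref{topushout}, is exactly how the paper builds the pushout diagram; without it your ``refinement'' step does not go through.
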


\begin{proof}

(i)$\implies$ (ii)
For an arbitrary $M\in\mathcal{C}$, let $g:T_M\rightarrow M$ be a right $\mathcal{T}$-approximation. By Proposition \ref{prop1} it suffices to show that $\mathrm{Hom}_A(\mathcal{T},\mathrm{coker}g)=0$. 
So assume there exists a morphism $T_3\rightarrow \mathrm{coker}g$ for some $T_3 \in \mathcal{T}$. Then there exist $X,Y\in \mathcal{C}$ and a pullback diagram by Remark \ref{pullre}

$$\begin{tikzcd} 0 \arrow{r} & T_0 \arrow{r}\arrow[equal]{d} & X\arrow{r}\arrow{d} & Y\arrow{r}\arrow{d} &T_3 \arrow{d}\arrow{r}&0\\
0 \arrow{r} & T_0\arrow{r} & T_M\arrow{r} & M\arrow{r} &\mathrm{coker}g\arrow{r}&0
\end{tikzcd}$$
By assumption there exist some $T_0^\prime, T_1^\prime, T_2^\prime\in \mathcal{T}$ making the following diagram commute:

$$\begin{tikzcd} 0 \arrow{r} & T_0^\prime \arrow{r}\arrow{d} & T_1^\prime\arrow{r}\arrow{d} & T_2^\prime\arrow{r}\arrow{d} &T_3 \arrow[equal]{d}\arrow{r}&0\\
0 \arrow{r} & T_0 \arrow{r}\arrow[equal]{d} & X\arrow{r}\arrow{d} & Y\arrow{r}\arrow{d} &T_3 \arrow{d}\arrow{r}&0\\
0 \arrow{r} & T_0\arrow{r} & T_M\arrow{r} & M\arrow{r} &\mathrm{coker}g\arrow{r}&0
\end{tikzcd}$$
Since the composition $T_2^\prime\twoheadrightarrow T_3\rightarrow \mathrm{coker}g$ is non-zero there exists a non-zero morphism $T_2^\prime \rightarrow M$, which by assumption factors through $T_M$. But this implies the above non-zero morphism $T_2^\prime\rightarrow \mathrm{coker}g$ factors through $T_M\rightarrow M\rightarrow \mathrm{coker}g$, a contradiction.

(ii)$\implies$ (i)
Clearly $\mathcal{T}$ is closed under factor modules in $\mathcal{C}$. Now let 
$$(*):\ 0\rightarrow T_0\rightarrow X\rightarrow Y\rightarrow T_3\rightarrow 0$$  be $d$-exact with $T_0,T_3\in \mathcal{T}$. By Proposition \ref{prop1} there exist $T_X,T_Y\in\mathcal{T}$ and $F_X,F_Y \in\mathcal{F}$ and exact sequences
$$T_X\rightarrow X\rightarrow F_X,$$
$$T_Y\rightarrow Y\rightarrow F_Y.$$
Applying $\mathrm{Hom}_A(-,F)$ to $(*)$ implies an isomorphism $\mathrm{Hom}_A(Y,F)\cong \mathrm{Hom}_A(X,F)$ for any $F\in \mathrm{Sub}\mathcal{F}$;  this implies $\mathrm{Im}(X\rightarrow F_X)\cong\mathrm{Im}(Y\rightarrow F_Y)$. We are in the situation of Lemma \ref{topushout}, so there exist $T_X^\prime, T_Y^\prime, T^\prime \in\mathcal{T}$ such that there there is a pullback diagram

$$\begin{tikzcd}
&&0\arrow{d}&0\arrow{d}\\
&0\arrow{r}&T^\prime_X\arrow{r}\arrow{d}& T^\prime_Y \arrow{dr}\arrow{d}\\
0\arrow{r}&T_0 \arrow{r}\arrow[equal]{d}&T_X \arrow{d}\arrow{r}&T_Y\arrow{r}\arrow{d}&T^\prime\arrow{r}\arrow{d}&0\\
0\arrow{r} &T_0 \arrow{r}&X \arrow{r}&Y\arrow{r}&T_3\arrow{r}\arrow{d}&0\\
&&&&0
\end{tikzcd}$$ 
This induces the required $d$-pushout diagram

$$\begin{tikzcd}
0\arrow{r}& T_0\oplus T_X^\prime \arrow{r}\arrow{d}& T_X\oplus T_Y^\prime \arrow{r}\arrow{d}& T_Y\arrow{r}\arrow{d}& T_3\arrow{r}\arrow[equals]{d}& 0\\
0\arrow{r}& T_0 \arrow{r}&X \arrow{r}& Y\arrow{r}& T_3\arrow{r}& 0\\
\end{tikzcd}$$
since we know $0\rightarrow T_X^\prime \rightarrow T_X\oplus T_Y^\prime\rightarrow X\oplus T_Y\rightarrow Y\rightarrow 0$ is exact. 
\end{proof}

Recall an object $T\in\mathcal{C}$ is $\tau$-tilting \cite{adir} if 
\begin{itemize}
\item $\mathrm{Hom}_A(T,\tau T)=0$
\item $|T|=|A|$, where $|\cdot|$ denotes the number of indecomposable summands.
\end{itemize}
If $T$ is $\tau$-tilting as an $A/\langle e\rangle$-module for some idempotent $e$, then $T$ is \emph{support $\tau$-tilting}. This can be generalised as follows, using the generalised tilting theory of \cite{ha}, \cite{miya}: recall that an $A$-module $T$ is a \emph{$d$-tilting module} if:

\begin{enumerate}
\item $\mathrm{proj.dim}(T)\leq d$.
\item $\mathrm{Ext}^i_A(T,T)=0$ for all $0<i\leq d$.
\item there exists an exact sequence $$0\rightarrow A\rightarrow T_0\rightarrow T_1\rightarrow \cdots \rightarrow T_d\rightarrow 0$$ 
where $T_0,\ldots, T_d\in \mathrm{add}T$. 
\end{enumerate}
We may now define the latter of our primary objects of study.
\begin{definition}
An object $T\in\mathcal{C}$ is \emph{$\tau_2$-tilting} if 
\begin{itemize}
\item $\mathrm{Hom}_A(T,\tau_2 T)=0$
\item There exists an exact sequence
$$0\rightarrow A\rightarrow T_0\rightarrow T_1\rightarrow T_2\rightarrow 0$$ such that $T_0,T_1,T_2\in\mathrm{add}T$.
\end{itemize}
If $T$ is $\tau_2$-tilting as an $A/\langle e\rangle$-module for some idempotent $e$, then we say that $T$ is \emph{support $\tau_2$-tilting}.
\end{definition}
We now show that support $\tau_2$-tilting modules are indeed $2$-tilting. Recall that an $A$-module $T$ is \emph{faithful} if its right annihilator $\mathrm{ann}T$ vanishes.
\begin{lemma}[c.f. Lemma IV.2.7 of \cite{ass}; c.f Lemma VII.5.1 of \cite{ass}; c.f. Proposition 2.2(b)  of \cite{adir}]\label{adirlemma}
Let $A$ be a finite-dimensional algebra with 2-cluster-tilting subcategory $\mathcal{C}\subseteq \mathrm{mod}A$. Then
\begin{enumerate}[(i)]
\item For any $T\in\mathcal{C}$, $\mathrm{proj.dim}T\leq 2$ if and only if $\mathrm{Hom}_A(DA,\tau_2T)=0$.
\item Let $T\in\mathcal{C}$ be a faithful $A$-module. If $\mathrm{Hom}_A(T,\tau_2T)=0$, then $\mathrm{proj.dim}T\leq 2$.
\item Any $\tau_2$-tilting $A$-module $T$ is a tilting $(A/\mathrm{ann}T)$-module.
\end{enumerate}
\end{lemma}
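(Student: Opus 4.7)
The plan is to handle the three parts in sequence: reducing (i) to the classical $d=1$ case, proving (ii) by adapting the argument of \cite[Proposition~2.2(b)]{adir} to $d=2$, and assembling (iii) from the first two parts together with direct verification of the $2$-tilting axioms.

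For (i), we apply the classical statement (Lemma IV.2.7 of \cite{ass}): for any $M\in\mathrm{mod}A$, one has $\mathrm{proj.dim}\, M \leq 1$ if and only if $\mathrm{Hom}_A(DA,\tau M)=0$. Taking $M = \Omega T$, this reads $\mathrm{proj.dim}\, \Omega T \leq 1 \iff \mathrm{Hom}_A(DA,\tau\Omega T)=0$. Since $\tau_2 T = \tau\Omega T$ by definition, and $\mathrm{proj.dim}\, T \leq 2 \iff \mathrm{proj.dim}\, \Omega T \leq 1$ (with the projective case trivially satisfying both sides), statement (i) follows.

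For (ii), the first step is to establish the dual form of Proposition \ref{aslemma}, namely $\mathrm{Hom}_A(X,\tau_2 Y) = 0 \iff \mathrm{Ext}^2_A(Y,\mathrm{Fac}\, X) = 0$. Setting $X = Y = T$ converts the $\tau_2$-rigidity hypothesis into $\mathrm{Ext}^2_A(T,\mathrm{Fac}\, T) = 0$. Faithfulness of $T$ produces an embedding $A\hookrightarrow T^n$ with cokernel $L\in\mathrm{Fac}\, T$, yielding in particular $\mathrm{Ext}^2_A(T,L)=0$. Classical AR duality $D\mathrm{Ext}^1_A(\Omega T,T)\cong \overline{\mathrm{Hom}}_A(T,\tau\Omega T) = \overline{\mathrm{Hom}}_A(T,\tau_2 T)$, combined with $\tau_2$-rigidity, then forces $\mathrm{Ext}^2_A(T,T)=0$. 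Feeding these vanishings, alongside $\mathrm{Ext}^1_A(T,A)=0$ (since $T,A\in\mathcal{C}$), into the long exact sequence of $\mathrm{Hom}_A(T,-)$ applied to $0\to A\to T^n\to L\to 0$ is intended to yield $\mathrm{proj.dim}\, T \leq 2$; equivalently, via part (i), $\mathrm{Hom}_A(DA,\tau_2 T)=0$.

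For (iii), observe first that any $\tau_2$-tilting $T$ is automatically faithful: the defining sequence $0\to A\to T_0\to T_1\to T_2\to 0$ exhibits $A$ as a submodule of $T_0\in\mathrm{add}\, T$, forcing $\mathrm{ann}\, T\subseteq \mathrm{ann}\, T_0 = 0$. Consequently (ii) yields axiom (1) of $2$-tilting, $\mathrm{proj.dim}\, T \leq 2$. Axiom (2), $\mathrm{Ext}^i_A(T,T)=0$ for $i=1,2$, follows from $T\in\mathcal{C}$ (for $i=1$) and from the AR duality computation of (ii) (for $i=2$). Axiom (3), the $\mathrm{add}\, T$-coresolution of $A$, is built into the definition of $\tau_2$-tilting. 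Since $\mathrm{ann}\, T = 0$, we have $A/\mathrm{ann}\, T = A$, and so $T$ is $2$-tilting over this algebra.

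The chief obstacle lies in part (ii): upgrading the piecewise vanishings (on $\mathrm{Fac}\, T$, on $T$, and on $T^n$) to the full bound $\mathrm{proj.dim}\, T \leq 2$. In the $d=1$ setting of \cite{adir} the analogous long-exact-sequence argument terminates after a single step, but in degree $2$ one must additionally control $\mathrm{Ext}^{\geq 3}_A(T,T)$, and the naive approach through $\mathrm{Ext}^3_A(T,A)$ appears circular. Overcoming this likely requires either iterating the argument, or working directly with the equivalent reformulation $\mathrm{Hom}_A(DA,\tau_2 T)=0$ furnished by part (i) together with faithfulness.
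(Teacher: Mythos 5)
Your parts (i) and (iii) are fine: the syzygy-shift reduction of (i) to the classical Lemma IV.2.7 is a legitimate alternative to the paper's direct computation with the Nakayama functor applied to $0\rightarrow \tau_2T\rightarrow \nu P_2\rightarrow \nu P_1\rightarrow \nu P_0\rightarrow \nu T\rightarrow 0$, and your observation in (iii) that the coresolution $0\to A\to T_0\to T_1\to T_2\to 0$ forces $\mathrm{ann}\,T=0$ is correct given the paper's definition (the paper instead runs the argument over $A/\mathrm{ann}\,T$ via the inclusion $\mathrm{Ext}^2_{A/\langle e\rangle}(M,N)\hookrightarrow\mathrm{Ext}^2_A(M,N)$ and Proposition \ref{aslemma}, which covers the case where the annihilator is nonzero).

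Part (ii), however, has a genuine gap, and it is the one you yourself flag. The route through $0\to A\to T^n\to L\to 0$ cannot terminate: the long exact sequence only yields an injection $\mathrm{Ext}^3_A(T,A)\hookrightarrow\mathrm{Ext}^3_A(T,T)^n$, you have no control over $\mathrm{Ext}^3_A(T,T)$, and even $\mathrm{Ext}^3_A(T,A)=0$ would not by itself bound the projective dimension (think of a self-injective algebra). The missing idea is to use the \emph{other} characterisation of faithfulness: $T$ is faithful if and only if $DA$ is \emph{generated} by $T$, i.e.\ there is an epimorphism $T^i\twoheadrightarrow DA$ (you instead used the dual embedding $A\hookrightarrow T^n$, which points the wrong way for this purpose). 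Applying the left exact contravariant functor $\mathrm{Hom}_A(-,\tau_2T)$ to that epimorphism gives a monomorphism $\mathrm{Hom}_A(DA,\tau_2T)\hookrightarrow\mathrm{Hom}_A(T,\tau_2T)^i=0$, and part (i) then delivers $\mathrm{proj.dim}\,T\leq 2$ in one line. This is exactly the paper's proof (the direct generalisation of Lemma VII.5.1 of \cite{ass}); your closing sentence correctly guesses that one should ``work directly with $\mathrm{Hom}_A(DA,\tau_2T)=0$,'' but the proposal never actually executes that step, so as written the proof of (ii) — and hence the dependence of (iii) on it — is incomplete.
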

\begin{proof}
$(i)$: Apply the left exact functor $\nu^{-1}=\mathrm{Hom}_A(DA,-)$ to the exact sequence
$$0\rightarrow \tau_2M\rightarrow \nu P_2\rightarrow \nu P_1\rightarrow \nu P_0\rightarrow \nu M\rightarrow 0$$ to obtain an exact sequence $$0\rightarrow \nu^{-1}\tau_2M\rightarrow \nu^{-1}\nu P_2\rightarrow \nu^{-1}\nu P_1\rightarrow \nu^{-1}\nu P_0\rightarrow \nu^{-1}\nu M\rightarrow 0$$
It follows that $\mathrm{Hom}_A(DA,\tau_2 M)=\nu^{-1}\tau_2M$ vanishes if and only if $\mathrm{proj.dim}M\leq 2$.

$(ii)$: It is known (see \cite[V1.2.2]{ass}) that an $A$-module $T$ is faithful if and only if $DA$ is generated by $T$. So let $T^i\twoheadrightarrow DA$ be a surjection. Applying the functor $\mathrm{Hom}_A(-,\tau_2T)$ results in a monomorphism $\mathrm{Hom}_A(DA,\tau_2 T)\hookrightarrow \mathrm{Hom}_A(T,\tau_2T)$. The result now follows from part (i) above.

$(iii)$   Note that for any idempotent ideal $\langle e\rangle$ of $A$, and any $M,N\in\mathrm{mod}A$ there is a natural inclusion $\mathrm{Ext}^2_{A/\langle e \rangle}(M,N)\hookrightarrow \mathrm{Ext}^2_{A}(M,N)$. In this case let $\langle e\rangle:=\mathrm{ann}T$. Since  $\mathrm{Hom}_A(T,\tau_2 T)=0$, Proposition \ref{aslemma} implies $\mathrm{Ext}^2_A(T,\mathrm{Fac}T)=0$, and it follows that $\mathrm{Ext}^2_{A/\langle e\rangle}(T,\mathrm{Fac}T)=0$. By Proposition \ref{aslemma} once more, we have $\mathrm{Hom}_{A/\langle e\rangle}(T,\tau_2T)=0$. Since $T$ is faithful as an $(A/\mathrm{ann}T)$-module, it follows from part $(ii)$ that $\mathrm{proj.dim}T\leq d$ and that $T$ is $2$-tilting as an $(A/\mathrm{ann}T)$-module.
\end{proof}
\begin{lemma}[Happel \cite{ha}]\label{happel}
Let $A$ be a finite-dimensional algebra and $T$ be a $d$-tilting $A$-module. Assume that $M\in \mathrm{mod}(A)$ satisfies $\mathrm{Ext}_A^i(T,M)=0$ for all $i>0$. Then there exists an exact sequence $$0\rightarrow T_m\rightarrow \cdots \rightarrow T_1\rightarrow T_0\rightarrow M\rightarrow 0$$ such that $T_j\in\mathrm{add}(T)$ for all $0\leq j\leq m$ and $m\leq \mathrm{gl.dim}A$.
\end{lemma}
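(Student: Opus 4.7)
The plan is to follow the classical Miyashita--Happel strategy: use the defining coresolution of $A$ by $\mathrm{add}(T)$ to peel off one $\mathrm{add}(T)$-summand from $M$ at a time, while verifying that the syzygy stays in the class $T^{\perp}:=\{N\mid \mathrm{Ext}_A^i(T,N)=0,\ \forall\, i>0\}$ at each stage, and then close out by a projective-dimension bookkeeping.

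For the first step, I would apply $\mathrm{Hom}_A(-,M)$ to the coresolution $0\to A\to T^0\to\cdots\to T^d\to 0$ provided by the $d$-tilting hypothesis. Splitting it into short exact sequences and using $\mathrm{Ext}_A^i(T^j,M)=0$ for all $i>0$ (since $T^j\in\mathrm{add}(T)$ and $M\in T^{\perp}$), the resulting complex is exact and terminates in $M$. Via the tilting adjunction with $B:=\mathrm{End}_A(T)^{\mathrm{op}}$, this produces a surjection $T_0\twoheadrightarrow M$ with $T_0\in\mathrm{add}(T)$ (essentially as the counit $T\otimes_B\mathrm{Hom}_A(T,M)\to M$). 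Setting $M^{(1)}:=\ker(T_0\to M)$ and applying $\mathrm{Hom}_A(T,-)$ to $0\to M^{(1)}\to T_0\to M\to 0$, the long exact sequence, together with $\mathrm{Ext}^i_A(T,T_0)=0$ for $0<i\le d$ (from $d$-tilting) and $\mathrm{Ext}^i_A(T,-)=0$ for $i>d$ (from $\mathrm{proj.dim}(T)\le d$), forces $M^{(1)}\in T^{\perp}$, enabling iteration.

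Iterating the construction yields a resolution $\cdots\to T_1\to T_0\to M\to 0$ with each $T_j\in\mathrm{add}(T)$. Termination at step $m\le \mathrm{gl.dim}\,A$ will follow from a projective-dimension argument: under the tilting identification with $B$, the construction corresponds to producing a projective $B$-resolution of $\mathrm{Hom}_A(T,M)$, and I would bound its length using the $d$-tilting structure together with the finite global dimension of $A$. I expect the main obstacle to be exactly this termination estimate: the sharp bound $m\le \mathrm{gl.dim}\,A$, as opposed to the coarser $\mathrm{gl.dim}\,B\le \mathrm{gl.dim}\,A+d$, requires tracking how the assumption $\mathrm{Ext}^i_A(T,M)=0$ forces the projective dimensions of successive syzygies $M^{(i)}$ to drop by at least one at each step, so that after at most $\mathrm{proj.dim}_A(M)\le \mathrm{gl.dim}\,A$ stages the syzygy is itself an object of $\mathrm{add}(T)$.
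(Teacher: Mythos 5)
The paper offers no proof of this lemma---it is quoted as a known result of Happel---so there is no in-paper argument to compare against; your plan is the standard Miyashita--Happel route. The constructive half is essentially right: the coresolution $0\to A\to T^0\to\cdots\to T^d\to 0$ together with $\mathrm{Ext}^{>0}_A(T,M)=0$ yields, by dimension shifting, a surjection onto $M$ from an object of $\mathrm{add}(T)$, and the syzygy stays in $T^{\perp}$. One detail you gloss over: the long exact sequence gives $\mathrm{Ext}^1_A(T,M^{(1)})\cong\mathrm{coker}\bigl(\mathrm{Hom}_A(T,T_0)\to\mathrm{Hom}_A(T,M)\bigr)$, so $T_0\to M$ must be a right $\mathrm{add}(T)$-approximation and not merely a surjection; your counit-style choice does have this property (after replacing $\mathrm{Hom}_A(T,M)$ by a projective $B$-cover, since $T\otimes_B\mathrm{Hom}_A(T,M)$ itself need not lie in $\mathrm{add}(T)$), but the justification you state---vanishing of $\mathrm{Ext}^i_A(T,T_0)$---only covers $i\ge 2$.

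The genuine gap is the termination step, which you yourself flag as unresolved, and the mechanism you sketch for it does not work as stated. From $0\to M^{(i+1)}\to T_i\to M^{(i)}\to 0$ one only gets $\mathrm{proj.dim}\,M^{(i+1)}\le\max(\mathrm{proj.dim}\,T_i,\ \mathrm{proj.dim}\,M^{(i)}-1)$, so the projective dimension drops only until it reaches $\mathrm{proj.dim}\,T\le d$ and need not decrease further; moreover a syzygy lying in $T^{\perp}$ with projective dimension $\le d$ is not thereby an object of $\mathrm{add}(T)$. The standard way to finish is to apply $\mathrm{Hom}_A(T,-)$ to the whole construction: since every $M^{(i)}\in T^{\perp}$, the short exact sequences remain exact and $(\mathrm{Hom}_A(T,T_i))_i$ becomes a projective resolution of $\mathrm{Hom}_A(T,M)$ over $B=\mathrm{End}_A(T)^{\mathrm{op}}$. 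One then needs (a) the bound $\mathrm{proj.dim}_B\mathrm{Hom}_A(T,M)\le\mathrm{gl.dim}\,A$, which is strictly sharper than the generic $\mathrm{gl.dim}\,B\le\mathrm{gl.dim}\,A+d$ and must exploit the hypothesis $M\in T^{\perp}$ (e.g.\ via $\mathrm{Ext}^i_B(\mathrm{Hom}_A(T,M),-)$ computed through the derived equivalence), and (b) the fact that $\mathrm{Hom}_A(T,-)$ restricted to $T^{\perp}$ is fully faithful and identifies $\mathrm{add}(T)$ with $\mathrm{proj}\,B$, so that projectivity of the final $B$-syzygy forces $M^{(m)}\in\mathrm{add}(T)$. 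Neither (a) nor (b) appears in your proposal, and (a) is the real content of the length bound.
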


We may now prove our main result.
\begin{customthm}{\ref{elso}}
An object $T\in\mathcal{C}$ is a support $\tau_2$-tilting module if and only if $\mathrm{Fac}\mathcal{T} \cap\mathcal{C}$ is the torsion part of a 2-functorially-finite torsion pair in $\mathcal{C}$.
\end{customthm}

\begin{proof}
Let $\mathcal{T}$ be a torsion class in $\mathcal{C}$, then by 2-functorial finiteness and closure under factor modules, there exists an exact sequence $A\rightarrow T_0\rightarrow T_1\rightarrow T_2\rightarrow 0$ such that $f:A\rightarrow T_0$ is a left $\mathcal{T}$-approximation and $T_1, T_2\in\mathcal{T}$. Clearly $\mathrm{Im}f\cong A/\mathrm{Ann}\mathcal{T}$.

 Now we need to prove that $T_i$ are $\mathrm{Ext}$-projective in $\mathcal{T}$ for all $0\leq i\leq 2$, i.e. that $\mathrm{Ext}^2_A(T_i,T)=0$ for all $T\in\mathcal{T}$.
First suppose $\mathrm{Ext}_A^2(T_0,T)\ne 0$. Then by Proposition \ref{prop2} there exists some $T_0^\prime\in\mathcal{T}$ and a non-split surjection $T_0^\prime\twoheadrightarrow T_0$. But then $f$ must factor through $T_0^\prime$, a contradiction. 

We next claim that there is a surjection $\mathrm{Ext}^2_A(T_2,T)\twoheadrightarrow \mathrm{Ext}_A^2(T_1,T)$ for any $T\in\mathcal{T}$. Let $X:=\mathrm{Im}(T_0\rightarrow T_1)$. This induces exact sequences
$$ \mathrm{Ext}^2_A(T_2,T)\rightarrow \mathrm{Ext}^2_A(T_1,T)\rightarrow \mathrm{Ext}^2_A(X,T)$$ 
$$\mathrm{Ext}^1_A(A/\mathrm{ann}\mathcal{T},T)\rightarrow \mathrm{Ext}^2_A(X,T)\rightarrow \mathrm{Ext}^2_A(T_0,T)=0.$$
Suppose $\mathrm{Ext}^1_A(A/\mathrm{ann}\mathcal{T},T)\ne 0$ and there exists a short exact sequence
$$0\rightarrow T\rightarrow E\rightarrow A/\mathrm{ann}\mathcal{T}\rightarrow 0.$$
But then $E$ must be an $A/\mathrm{ann}\mathcal{T}$-module, since both $T$ and $A/\mathrm{ann}\mathcal{T}$ are, meaning that this sequence splits, a contradiction. So $\mathrm{Ext}^1_A(A/\mathrm{ann}\mathcal{T},T)= 0 = \mathrm{Ext}^2_A(X,T)$, and there is a surjection $\mathrm{Ext}^2_A(T_2,T)\twoheadrightarrow \mathrm{Ext}_A^2(T_1,T)$ as claimed.

Now suppose $\mathrm{Ext}_A^2(T_2,T^\prime)\ne 0$ for some $T^\prime\in\mathcal{T}$. By Proposition \ref{prop2} there exists $T_0^\prime,T_1^\prime,T_2^\prime \in\mathcal{T}$ and a $d$-exact sequence $0\rightarrow T_0^\prime \rightarrow T_1^\prime \rightarrow T_2^\prime \rightarrow T_2\rightarrow 0$. 
By Lemma \ref{pushout} there exist $P,Q\in\mathcal{C}$ and a commutative diagram with exact rows and columns:
$$\begin{tikzcd}
&&&0\arrow{d}&0\arrow{d}\\
&&&A/\mathrm{ann}\mathcal{T}\arrow[equal]{r}\arrow{d}&A/\mathrm{ann}\mathcal{T}\arrow{d}\\
&& Q\arrow{r}\arrow{d}& Q\oplus T_0 \arrow{r}\arrow{d}& T_0\arrow{d}\\
0\arrow{r}&T_0^\prime \arrow{r}\arrow[equal]{d}&Q\oplus T_1^\prime \arrow{d}\arrow{r}&P\arrow{r}\arrow{d}&T_1\arrow{r}\arrow{d}&0\\
0\arrow{r} &T_0^\prime  \arrow{r}&T_1^\prime \arrow{r}&T_2^\prime \arrow{r}\arrow{d}&T_2\arrow{r}\arrow{d}&0\\
&&&0&0
\end{tikzcd}$$ 
Moreover $P,Q\in\mathcal{T}$ by assumption, since they have no common non-zero summands and we may replace $T_0^\prime$ if necessary. This implies a morphism $A\rightarrow Q$, which by assumption factors through $T_0$. But this implies the morphism $T_0\rightarrow T_1$ also factors through $Q$, and is hence the zero composition $Q\rightarrow P\rightarrow T_1$, a contradiction.


On the other hand, let $T$ be a support $\tau_2$-tilting module. We show that $\mathrm{Fac}T\cap\mathcal{C}$ satisfies the conditions of Proposition \ref{prop2}. Closure under factor modules is trivial, and 2-functorial finiteness in $\mathcal{C}$ follows from Lemma \ref{happel}. So let 
$$0\rightarrow T_0 \rightarrow X\rightarrow Y\rightarrow T_3\rightarrow 0$$ be a $d$-exact sequence in $\mathcal{C}$ with $T_0,T_3\in\mathrm{Fac}T$. By Lemma \ref{aslemma}, we have that $\mathrm{Hom}_A(T,\tau_2T)=0\iff \mathrm{Ext}^2_A(T,\mathrm{Fac}T)=0$, so $T_3$ is not in $\mathrm{add}T$. Lemma \ref{happel} implies there exists an exact sequence in $\mathrm{Fac}T$:
$0\rightarrow T_0^\prime \rightarrow T_1^\prime \rightarrow T_2^\prime \rightarrow T_3\rightarrow 0.$ By Lemma \ref{pushout} there exist  $P,Q\in\mathcal{C}$ to form a commutative diagram with exact rows and columns:

$$\begin{tikzcd}
&&&0\arrow{d}&0\arrow{d}\\
&&&T_0^\prime \arrow[equal]{r}\arrow{d}& T_0^\prime\arrow{d}\\
&& Q\arrow{r}\arrow{d}& Q\oplus T_1^\prime \arrow{r}\arrow{d}& T_1^\prime\arrow{d}\\
0\arrow{r}&T_0 \arrow{r}\arrow[equal]{d}&Q\oplus X \arrow{d}\arrow{r}&P\arrow{r}\arrow{d}&T_2^\prime\arrow{r}\arrow{d}&0\\
0\arrow{r} &T_0 \arrow{r}&X \arrow{r}&Y\arrow{r}\arrow{d}&T_3\arrow{r}\arrow{d}&0\\
&&&0&0
\end{tikzcd}$$ 
 Applying 
Lemma \ref{aslemma} once more, we find $\mathrm{Ext}^2_A(T_2^\prime,T_0)=0$. This implies the diagram splits, $Q\cong T_0$, $P\cong T_2^\prime\oplus X$ and that we are in the situation of Lemma \ref{topushout}, with a commutative diagram with exact rows and columns:
$$\begin{tikzcd}
&&0\arrow{d}&0\arrow{d}\\
&0\arrow{r}&T_0^\prime \arrow{r}\arrow{d}& T_1^\prime \arrow{dr}\arrow{d}\\
0\arrow{r}&T_0 \arrow{r}\arrow[equal]{d}&T_0\arrow{d}\arrow{r}&T_2^\prime\arrow{r}\arrow{d}&T_2^\prime \arrow{r}\arrow{d}&0\\
0\arrow{r} &T_0 \arrow{r}&X \arrow{r}&Y\arrow{r}&T_3\arrow{r}\arrow{d}&0\\
&&&&0
\end{tikzcd}$$ 

Hence we find the required $d$-pushout diagram
$$\begin{tikzcd}
0\arrow{r}& T_0\oplus T_0^\prime \arrow{r}\arrow{d}& T_0\oplus T_1^\prime \arrow{r}\arrow{d}& T_2^\prime\arrow{r}\arrow{d}& T_3\arrow{r}\arrow[equals]{d}& 0\\
0\arrow{r}& T_0 \arrow{r}&X \arrow{r}& Y\arrow{r}& T_3\arrow{r}& 0\\
\end{tikzcd}$$
since we know $0\rightarrow T_0^\prime \rightarrow T_0\oplus T_1^\prime\rightarrow X\oplus T_2^\prime\rightarrow Y\rightarrow 0$ is exact. 
\end{proof}



\bibliographystyle{amsplain}
\bibliography{citations}

\providecommand{\bysame}{\leavevmode\hbox to3em{\hrulefill}\thinspace}
\providecommand{\MR}{\relax\ifhmode\unskip\space\fi MR }
\providecommand{\MRhref}[2]{%
  \href{http://www.ams.org/mathscinet-getitem?mr=#1}{#2}
}
\providecommand{\href}[2]{#2}
\begin{thebibliography}{10}

\bibitem{adir}
Takahide Adachi, Osamu Iyama, and Idun Reiten, \emph{{$\tau$}-tilting theory},
  Compos. Math. \textbf{150} (2014), no.~3, 415--452. \MR{3187626}

\bibitem{ahmv}
Lidia {Angeleri Hügel}, Frederik Marks, and Jorge Vitória, \emph{Silting
  modules and ring epimorphisms}, Advances in Mathematics \textbf{303} (2016),
  1044--1076.

\bibitem{ahs2}
Lidia Angeleri~H{\"u}gel, \emph{Silting objects}, Bulletin of the London
  Mathematical Society \textbf{51} (2019), no.~4, 658--690.

\bibitem{asai}
Sota Asai, \emph{Semibricks}, International Mathematics Research Notices
  \textbf{2020} (2020), no.~16, 4993--5054.

\bibitem{ass}
Ibrahim Assem, Daniel Simson, and Andrzej Skowro\'nski, \emph{Elements of the
  representation theory of associative algebras. {V}ol. 1}, London Mathematical
  Society Student Texts, vol.~65, Cambridge University Press, Cambridge, 2006,
  Techniques of representation theory. \MR{2197389}

\bibitem{as}
Ibrahim Assem and Andrzej Skowro{\'n}ski, \emph{Iterated tilted algebras of
  type $\tilde{A}_n $}, Mathematische Zeitschrift \textbf{195} (1987), no.~2,
  269--290.

\bibitem{bl}
Karin Baur and Rosanna Laking, \emph{Classification of cosilting modules in
  type $\tilde{A}$}, arXiv preprint arXiv:1911.02495 (2019).

\bibitem{bst}
Thomas Br{\"u}stle, David Smith, and Hipolito Treffinger, \emph{Wall and
  chamber structure for finite-dimensional algebras}, Advances in Mathematics
  \textbf{354} (2019), 106746.

\bibitem{br}
Aslak~Bakke Buan and Bethany Marsh, \emph{A category of wide subcategories},
  International Mathematics Research Notices (2019).

\bibitem{di}
Erik Darp{\"o} and Osamu Iyama, \emph{d-representation-finite self-injective
  algebras}, Advances in Mathematics \textbf{362} (2020), 106932.

\bibitem{djl}
Tobias Dyckerhoff, Gustavo Jasso, and Yanki Lekili, \emph{The symplectic
  geometry of higher {A}uslander algebras: Symmetric products of disks}, Forum
  of Mathematics, Sigma, vol.~9, Cambridge University Press, 2021.

\bibitem{djw}
Tobias Dyckerhoff, Gustavo Jasso, and Tashi Walde, \emph{Simplicial structures
  in higher {A}uslander--{R}eiten theory}, Advances in Mathematics \textbf{355}
  (2019), 106762.

\bibitem{ha}
Dieter Happel, \emph{Triangulated categories in the representation theory of
  finite-dimensional algebras}, London Mathematical Society Lecture Note
  Series, vol. 119, Cambridge University Press, Cambridge, 1988. \MR{935124}

\bibitem{it}
Colin Ingalls and Hugh Thomas, \emph{Noncrossing partitions and representations
  of quivers}, Compos. Math. \textbf{145} (2009), no.~6, 1533--1562.
  \MR{2575093}

\bibitem{iy3}
Osamu Iyama, \emph{Auslander correspondence}, Adv. Math. \textbf{210} (2007),
  no.~1, 51--82. \MR{2298820}

\bibitem{iy1}
\bysame, \emph{Higher-dimensional {A}uslander-{R}eiten theory on maximal
  orthogonal subcategories}, Adv. Math. \textbf{210} (2007), no.~1, 22--50.
  \MR{2298819}

\bibitem{irrt}
Osamu Iyama, Nathan Reading, Idun Reiten, and Hugh Thomas, \emph{Lattice
  structure of {W}eyl groups via representation theory of preprojective
  algebras}, Compos. Math. \textbf{154} (2018), no.~6, 1269--1305. \MR{3826458}

\bibitem{ir}
Osamu Iyama and Idun Reiten, \emph{Introduction to {$\tau$}-tilting theory},
  Proc. Natl. Acad. Sci. USA \textbf{111} (2014), no.~27, 9704--9711.
  \MR{3263302}

\bibitem{irtt}
Osamu Iyama, Idun Reiten, Hugh Thomas, and Gordana Todorov, \emph{Lattice
  structure of torsion classes for path algebras}, Bulletin of the London
  Mathematical Society \textbf{47} (2015), no.~4, 639--650.

\bibitem{is}
Osamu Iyama and {\O}yvind Solberg, \emph{Auslander-{G}orenstein algebras and
  precluster tilting}, Adv. Math. \textbf{326} (2018), 200--240. \MR{3758429}

\bibitem{jasso}
Gustavo Jasso, \emph{{$n$}-abelian and {$n$}-exact categories}, Math. Z.
  \textbf{283} (2016), no.~3-4, 703--759. \MR{3519980}

\bibitem{jk}
Gustavo Jasso, Julian K\"{u}lshammer, Chrysostomos Psaroudakis, and Sondre
  Kvamme, \emph{Higher {N}akayama algebras {I}: {C}onstruction}, Adv. Math.
  \textbf{351} (2019), 1139--1200. \MR{3959141}

\bibitem{kva}
Sondre Kvamme, \emph{$d\mathbb{Z}$-cluster tilting subcategories of singularity
  categories}, Mathematische Zeitschrift (2020), 1--23.

\bibitem{mcmahon}
Jordan McMahon, \emph{Fabric idempotents and higher {A}uslander-{R}eiten
  theory}, Journal of Pure and Applied Algebra \textbf{224} (2020), no.~8,
  106343.

\bibitem{mw}
Jordan McMahon and Nicholas~J. Williams, \emph{The combinatorics of tensor
  products of higher {A}uslander algebras of type {A}}, Glasgow Mathematical
  Journal (2020), 1–21.

\bibitem{miya}
Yoichi Miyashita, \emph{Tilting modules of finite projective dimension},
  Mathematische Zeitschrift \textbf{193} (1986), no.~1, 113--146.

\bibitem{ot}
Steffen Oppermann and Hugh Thomas, \emph{Higher-dimensional cluster
  combinatorics and representation theory}, J. Eur. Math. Soc. (JEMS)
  \textbf{14} (2012), no.~6, 1679--1737. \MR{2984586}

\bibitem{pppp}
Arnau Padrol, Yann Palu, Vincent Pilaud, and Pierre-Guy Plamondon,
  \emph{Associahedra for finite type cluster algebras and minimal relations
  between g-vectors}, arXiv preprint arXiv:1906.06861 (2019).

\bibitem{pla}
Pierre-Guy Plamondon, \emph{$\tau$-tilting finite gentle algebras are
  representation-finite}, Pacific Journal of Mathematics \textbf{302} (2019),
  no.~2, 709--716.

\bibitem{ps}
Matthew Pressland and Julia Sauter, \emph{Special tilting modules for algebras
  with positive dominant dimension}, Glasgow Mathematical Journal (2020),
  1--27.

\bibitem{ring2}
Claus~Michael Ringel, \emph{Appendix: {S}ome remarks concerning tilting modules
  and tilted algebras. {O}rigin. {R}elevance. {F}uture}, Handbook of tilting
  theory, London Math. Soc. Lecture Note Ser., vol. 332, Cambridge Univ. Press,
  Cambridge, 2007, pp.~413--472. \MR{2384619}

\bibitem{will}
Nicholas~J Williams, \emph{New interpretations of the higher
  {S}tasheff--{T}amari orders}, arXiv preprint arXiv:2007.12664 (2020).

\end{thebibliography}

\end{document}